\newtheorem{theorem}{Theorem}[section]
\newtheorem{lemma}{Lemma}[section]
\newtheorem{remark}{Remark}[section]
\theoremstyle{example}
\theoremstyle{definition}
\theoremstyle{Assumption}
\begin{document}
\title{\textbf{Inertial randomized Kaczmarz algorithms for solving coherent linear systems}}
\author{ \sc Songnian He$^{a,b}${\thanks{ Email: songnianhe@163.com}},\,\,
Ziting Wang$^{a}${\thanks{Email: wangziting0114@163.com}}\,\,\,and\,\,
Qiao-Li Dong$^{a,b}${\thanks{Corresponding author. Email: dongql@lsec.cc.ac.cn}}\\
\small Tianjin Key Laboratory for Advanced Signal Processing and College of Science, \\
\small Civil Aviation University of China, Tianjin 300300, China,
}
\date{}
\maketitle

\begin{abstract}
{In this paper, by regarding the two-subspace Kaczmarz method \cite{NW2013} as an alternated inertial randomized Kaczmarz algorithm we present a new convergence rate estimate which is shown to be better than that in \cite{NW2013} under a mild  condition. Furthermore, we accelerate the alternated inertial randomized Kaczmarz algorithm and  introduce a multi-step inertial randomized Kaczmarz algorithm which is  proved to have a faster convergence rate.
Numerical experiments support the theory results and illustrate that the multi-inertial randomized Kaczmarz algorithm significantly outperform  the two-subspace Kaczmarz method  in solving coherent linear systems.}
\end{abstract}

\noindent{\bf Keywords:} System of linear equations,  randomized Kaczmarz algorithm, inertial randomized Kaczmarz algorithm, inertial extrapolation, two-subspace Kaczmarz method.

\section{ Introduction }
Let $\mathbb{R}^{J}$ be the real vector space of $J$-dimensional column vectors with usual inner product $\langle\cdot,\cdot\rangle$
and norm $\|\cdot\|$,  respectively. We consider the following  system of linear equations:
\begin{equation}\label{Ax=b}
Ax=b,
\end{equation}
where $b\in \mathbb{R}^I$,  $A\in\mathbb{R}^{I\times J}$ and $x\in \mathbb{R}^J$.
Denote by $A_i,\,i=1,2, \cdots, I$,   the row vectors of $A$, then the linear equations in the system \eqref{Ax=b}  can be expressed as  $A_i x=b_i$, $i=1,2,\cdots, I$. As is known to all, associated with each equation $ A_ix=b_i$,   there is a hyperplane $H_{i}$
{defined  by}
\begin{equation*}
H_{i}=\{x{\in\mathbb{R}^{J}}\,|\, A_i x=b_i\},\,\,\,\,\,i=1,2,\cdots,I.
\end{equation*}
The orthogonal projection operator $P_{i}$ from $\mathbb{R}^{J}$ onto the hyperplane $H_{i}$ assigns to each $z\in\mathbb{R}^{J}$ the unique point in $H_i$ closest to $z$, denoted by $P_iz,$  (see (\ref{eq:2.2}) below for its closed expression).

Kaczmarz algorithm is {one of the most popular methods} to solve \eqref{Ax=b} (see, e.g., \cite{KM1937}, \cite[Chapter 8]{BCL14}), which can be expressed in terms of the operators $\{P_{i}\}_{i=1}^I$ as follows:
 \begin{equation*}
x^{k+1}=P_{i(k)}x^{k},
\end{equation*}
where $i(k)= (k \mod I) + 1$.
Because Kaczmarz algorithm uses only  single equation at each iteration step, it is also called a row-action \cite{{1},Censor81} or sequential method. If the system \eqref{Ax=b} is consistent, the sequence $\{x^k\}_{k=0}^\infty$ generated by Kaczmarz algorithm  converges to the solution closest to the initial value $x^{0}$.

The convergence speed of Kaczmarz algorithm heavily depends on the ordering of the equations. In selecting the equation ordering, the important thing is to avoid particularly bad ordering, in which case the hyperplanes $H_{i}$ and $H_{i+1}$ are nearly parallel  \cite{hm1993}, i.e.,  {$A_i$ and $A_{i+1}$ is highly correlated (its definition is given later). Furthermore}, it is very difficult to estimate the convergence rate of Kaczmarz algorithm.
\vskip 1mm

To overcome this difficulty, Strohmer and  Vershynin \cite{SV} proposed the following randomized Kaczmarz algorithm.

\begin{algorithm}{\rm \caption{Randomized Kaczmarz algorithm}}
\begin{algorithmic}
\label{Al:1.2}
\vskip 2mm
{
\STATE {\textbf{Input:} $A$, $b$ and $x^0$}
\STATE {\textbf{\quad $k\leftarrow k+1$}\\
\quad Select $i_k\in\{1,2,\ldots,I\}$\,\,\,$\{$\hbox{Randomly select a row of} $A$ with probability $\frac{\|A_{i_k}\|^2}{\|A\|_F^2}\}$\\
\quad  Set $x^{k+1}=P_{i_k}x^{k}$ \qquad\qquad\qquad\qquad\qquad\qquad\qquad\qquad\quad\,\,$\{$\hbox{Perform projection}$\}$
}
}
\end{algorithmic}
\end{algorithm}

\noindent
{Strohmer and  Vershynin proved that Algorithm 1 converges linearly in expectation when the linear system (\ref{Ax=b}) is consistent and $A$ is a full column rank matrix with $I\geq J$.}  Later, Ma and Needell et al. \cite{MNR2015} showed that whether  the matrix $A$ {satisfies} $I\geq J$ with full column  rank, or $I<J$ with  full row rank, Algorithm 1 converges linearly  in expectation to the minimum norm solution of the linear system (\ref{Ax=b}).
Furthermore, Gower and Richt\'{a}rik \cite{GR2015} proved that even if the full rank assumption imposed on the matrix $A$ is removed, the convergence result of Algorithm 1 still holds. Precisely, their convergence result is as follows.

\begin{theorem} {\rm\cite{GR2015}}
\label{th1.1}
Assume that the linear system \eqref{Ax=b} is consistent. Let the initial guess $x^0$ be selected in ${\rm span}\{A_1^\top, A_2^\top, \cdots, A_I^\top\}$ arbitrarily. Then the sequence $\{x^k\}_{k=0}^\infty$ generated by Algorithm {\rm1} converges linearly in expectation to the minimum norm solution $x^\dag$ for the linear system \eqref{Ax=b}, i.e.,
\begin{equation}
\label{RK-e}
\mathbb{E}\left [\|x^k-x^\dag\|^2\right ]\leq \left (1-\frac{\lambda_{\rm min}(A^\top A)}{\|A\|^2_F}\right )^k\|x^0-x^\dag\|^2, \,\,\,\,\forall k\geq 1,
\end{equation}
where $\lambda_{\rm min}(A^\top A)$   denotes  the minimum nonzero eigenvalue of $A^{\top}A$.
\end{theorem}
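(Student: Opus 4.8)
The plan is to reduce the statement to a single conditional one-step contraction and then iterate it via the tower property of conditional expectation. Two structural facts drive the argument. First, the minimum norm solution $x^\dag$ is the unique member of the solution set $x^\dag + \mathrm{Null}(A)$ that lies in the subspace $V := \mathrm{span}\{A_1^\top,\dots,A_I^\top\} = \mathrm{Range}(A^\top) = \mathrm{Null}(A)^\perp$. Second, $V$ is invariant under every Kaczmarz step, because the update $x^{k+1} = x^k - \frac{A_{i_k}x^k - b_{i_k}}{\|A_{i_k}\|^2}A_{i_k}^\top$ changes $x^k$ only by a scalar multiple of $A_{i_k}^\top \in V$. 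Hence, under the hypothesis $x^0 \in V$, all iterates lie in $V$, and in particular the error $e^k := x^k - x^\dag$ satisfies $e^k \in V = \mathrm{Null}(A)^\perp$ for every $k$.

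Next I would derive the exact one-step identity. Consistency of \eqref{Ax=b} gives $A_{i_k}x^\dag = b_{i_k}$, so subtracting $x^\dag$ from the update rule yields $e^{k+1} = e^k - \frac{A_{i_k}e^k}{\|A_{i_k}\|^2}A_{i_k}^\top$; that is, $e^{k+1}$ is the orthogonal projection of $e^k$ onto the hyperplane $\{y : A_{i_k}y = 0\}$, and the two pieces are orthogonal. Pythagoras then gives
\[
\|e^{k+1}\|^2 = \|e^k\|^2 - \frac{(A_{i_k}e^k)^2}{\|A_{i_k}\|^2}.
\]
Taking the conditional expectation with respect to the random index $i_k$, which equals $i$ with probability $\|A_i\|^2/\|A\|_F^2$, the sampling weights cancel the denominators and the sum reassembles into a matrix product:
\[
\mathbb{E}\!\left[\|e^{k+1}\|^2 \,\middle|\, x^k\right] = \|e^k\|^2 - \frac{1}{\|A\|_F^2}\sum_{i=1}^I (A_ie^k)^2 = \|e^k\|^2 - \frac{\|Ae^k\|^2}{\|A\|_F^2}.
\]

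The final ingredient is the spectral lower bound $\|Ae^k\|^2 = (e^k)^\top A^\top A\, e^k \ge \lambda_{\min}(A^\top A)\,\|e^k\|^2$, and this is exactly where the invariance of $V$ is used: since $e^k \in \mathrm{Range}(A^\top)$, it lies in the span of the eigenvectors of $A^\top A$ with nonzero eigenvalues, so expanding $e^k$ in an orthonormal eigenbasis gives the bound with the smallest \emph{nonzero} eigenvalue rather than with $0$. Substituting into the previous display produces the per-step contraction
\[
\mathbb{E}\!\left[\|e^{k+1}\|^2 \,\middle|\, x^k\right] \le \left(1 - \frac{\lambda_{\min}(A^\top A)}{\|A\|_F^2}\right)\|e^k\|^2,
\]
and taking total expectations and applying this $k$ times gives \eqref{RK-e}.

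The main obstacle — indeed the only delicate point — is the bookkeeping around the invariant subspace $V$: one must check that $x^\dag \in V$ (so that $e^0 \in V$) and that the iteration preserves $V$, because a nonzero component of $e^k$ in $\mathrm{Null}(A)$ would be frozen by every Kaczmarz step, forcing the best available bound on $\|Ae^k\|^2$ to use the eigenvalue $0$ and destroying the contraction. Everything else is the classical Strohmer--Vershynin computation; the content of the theorem is that dropping the full-rank assumption costs nothing once both the iterates and the target are confined to the row space of $A$.
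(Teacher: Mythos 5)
Your proof is correct, and you should note that the paper does not prove this theorem itself --- it is quoted from Gower and Richt\'arik \cite{GR2015} --- but your argument is exactly the standard one and is the same template the paper uses for its own Theorems 3.1 and 4.1: the one-step Pythagorean identity for the projection $P_{i_k}$, conditional expectation over the sampling probabilities $\|A_i\|^2/\|A\|_F^2$, the restricted spectral bound of Lemma \ref{le:2.3} (valid because the error stays in ${\rm span}\{A_1^\top,\dots,A_I^\top\}$), and the tower property with induction. Your emphasis on checking that $x^\dag$ lies in the row space and that the iteration preserves it is precisely the point that lets the full-rank assumption be dropped, so nothing is missing.
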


Highly correlated linear systems are often encountered in engineering practice, for example sampling in  geophysics which is the reconstruction of a bandlimited function from nonuniformly spaced  since it  can be physically challenging to take uniform samples. When the sampling is expressed as a system of linear equations, the sampling points form the rows of a matrix $A$ which are close together and therefore the corresponding rows will be highly correlated.

The quantity
$$
\delta(A)=\min_{i\neq j;\, 1\leq i,j\leq I}\left |\left\langle \frac{A_i}{\|A_i\|},\frac{A_j}{\|A_j\|}\right\rangle\right|.
$$
is generally used to measure {the coherence between the rows of the matrix $A$}.  The larger $\delta(A)$ {means  the higher  coherence  between row vectors of $A$}.  Hereafter, $\delta(A)$ is called the coherence constant of $A$, and the linear system \eqref{Ax=b} is called coherent if $\delta(A)>0$. It is easy to see that when $\delta(A)$ is large, the convergence speed of Kaczmarz algorithm (even randomized Kaczmarz algorithm) is relatively slow, regardless of the order of equations.
Therefore, it is of great significance to {present accelerated versions} of the randomized Kaczmarz algorithm which can effectively solve the coherent linear systems.
\vskip 1mm

In \cite{NW2013}, Needell and Ward introduced so called two-subspace Kaczmarz method  based on the idea of the orthogonalization, where the matrix  $A$ is standardized which  means that each of its rows has unit Euclidean norm before the  implementation of the method.

\begin{algorithm}{\rm \caption{Two-subspace Kaczmarz method}}
\label{TSS}
\begin{algorithmic}
\vskip 2mm
{
\STATE {\textbf{Input:} $A$, $b$ and $x^0$}
\STATE {\quad \textbf{$k\leftarrow k+1$}\\
\quad  Select $r,s\in\{1,2,\ldots,I\}$ $\,\,\{$\hbox{Select two distinct rows of} $A$ \hbox{uniformly at random}$\}$\\
\quad Set $\mu_k\leftarrow \langle a_r,a_s\rangle$ \qquad\qquad\qquad\qquad\qquad\qquad\qquad\qquad$\,\,\,\{\hbox{Compute correlation}\}$\\
\quad Set $y^k\leftarrow x^{k-1}+(b_s-\langle x^{k-1},a_s\rangle)a_s$\qquad\qquad\quad $\{\hbox{Perform intermediate projection}\}$\\
\quad Set $v_k\leftarrow\frac{\displaystyle a_r-\mu_ka_s}{\displaystyle 1-\mu_k^2}$
\qquad\,\, $\{$\hbox{Compute vector orthogonal to $a_s$ in direction of $a_r$}$\}$\\
\quad Set $\beta_k\leftarrow\frac{\displaystyle b_r-\mu_kb_s}{\displaystyle 1-\mu_k^2}$\qquad\qquad\qquad\qquad\, $\{$\hbox{Compute corresponding measurement}$\}$\\
\quad $x^k\leftarrow y^k+(\beta_k-\langle y^k,v_k\rangle)v_k$\qquad\qquad\qquad\qquad\qquad\qquad\qquad $\{$\hbox{Perform projection}$\}$
}
}
\end{algorithmic}
\end{algorithm}

\noindent
The  convergence rate estimate  was established  when $A$ is a full rank standardized matrix with  $I>J$ in \cite{NW2013}. In fact the full rank assumption can be removed as  Theorem \ref{th1.1} and the convergence results is given as follows.

\begin{theorem}
\label{th-tss}
Assume that the linear system \eqref{Ax=b} is consistent. Let the initial guess $x^0$ be selected in ${\rm span}\{A_1^\top, A_2^\top, \cdots, A_I^\top\}$ arbitrarily. Let $\{x^k\}$ be the sequence generated by the two-subspace Kaczmarz method. Then
\begin{equation}\label{TSS2}
\mathbb{E}\left [\|x^{k}-x^\dag\|^2\right]\leq \left[\left(1-\frac{\lambda_{\rm min}(A^\top A)}{\|A\|_F^2}\right)^2-\frac{\lambda_{\rm min}(A^\top A)D}{\|A\|_F^2}\right]^k\|x^0-x^\dag\|^2,
\end{equation}
where $D=\min\{\frac{\delta(A)^2(1-\delta(A))}{1+\delta(A)},\frac{\Delta(A)^2(1-\Delta(A))}{1+\Delta(A)}\}$ and $\Delta(A)=\max_{i\neq j;\, 1\leq i,j\leq I}\left|\left\langle \frac{A_i}{\|A_i\|},\frac{A_j}{\|A_j\|}\right\rangle\right|.$
\end{theorem}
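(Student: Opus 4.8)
\textit{Proof plan.} The plan is to use that one step of the two-subspace Kaczmarz method is a composition of two exact orthogonal projections, so that the squared error decreases by a precisely computable amount to which two applications of the Pythagorean theorem apply, and then to extract the coherence gain from the scalar factor $1/(1-\mu_k^2)$.

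\medskip
\noindent\textbf{Step 1 (geometry and the exact one-step identity).} Since the method standardizes $A$, I may assume each row $a_i:=A_i$ has $\|a_i\|=1$, hence $\|A\|_F^2=I$. Exactly as in Theorem \ref{th1.1}, the assumption $x^0\in\mathrm{span}\{A_1^\top,\dots,A_I^\top\}=:R$, together with the fact that every increment lies in $\mathrm{span}\{a_r,a_s\}\subseteq R$, shows by induction that all iterates stay in $R$; since $x^\dag\in R$ as well, one has $\|Az\|^2\ge\lambda_{\min}(A^\top A)\|z\|^2$ both for $z=x^{k}-x^\dag$ and for $z=y^k-x^\dag$. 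A direct computation with $v_k,\beta_k$ shows $y^k$ is the orthogonal projection of $x^{k-1}$ onto $H_s$ and $x^k$ is the orthogonal projection of $x^{k-1}$ onto $H_r\cap H_s$, with $x^{k-1}-y^k\in\mathrm{span}\{a_s\}$ and $y^k-x^k\in\mathrm{span}\{a_r-\mu_k a_s\}$ mutually orthogonal. Since $x^\dag\in H_r\cap H_s$, applying the Pythagorean identity twice yields the exact identity
\begin{equation*}
\|x^k-x^\dag\|^2=\|x^{k-1}-x^\dag\|^2-\langle x^{k-1}-x^\dag,a_s\rangle^2-\frac{\langle y^k-x^\dag,a_r\rangle^2}{1-\mu_k^2},
\end{equation*}
where in addition $\langle y^k-x^\dag,a_s\rangle=0$, $\langle y^k-x^\dag,a_r\rangle=\langle x^{k-1}-x^\dag,a_r\rangle-\mu_k\langle x^{k-1}-x^\dag,a_s\rangle$, and $\|y^k-x^\dag\|^2=\|x^{k-1}-x^\dag\|^2-\langle x^{k-1}-x^\dag,a_s\rangle^2$.

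\medskip
\noindent\textbf{Step 2 (the coherence estimate — the crux).} The improvement over a double random Kaczmarz step comes from the factor $1/(1-\mu_k^2)$, which I bound by the elementary inequality
\begin{equation*}
\frac{1}{1-\mu^2}=1+\frac{\mu^2}{1-\mu^2}=1+\frac{g(|\mu|)}{(1-|\mu|)^2}\ \ge\ 1+g(|\mu|),\qquad g(t):=\frac{t^2(1-t)}{1+t},\quad |\mu|<1.
\end{equation*}
Because $r\ne s$ forces $\delta(A)\le|\mu_k|\le\Delta(A)$, and because $g$ is unimodal on $[0,1)$ — it vanishes at $t=0$, tends to $0$ as $t\to1$, and $g'(t)=2t(1-t-t^2)/(1+t)^2$ changes sign only once — its minimum over $[\delta(A),\Delta(A)]$ is attained at an endpoint, so $g(|\mu_k|)\ge\min\{g(\delta(A)),g(\Delta(A))\}=D$. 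Hence $1/(1-\mu_k^2)\ge 1+D$, and therefore $\langle y^k-x^\dag,a_r\rangle^2/(1-\mu_k^2)\ge(1+D)\langle y^k-x^\dag,a_r\rangle^2$.

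\medskip
\noindent\textbf{Step 3 (expectations and iteration).} Conditioning on $x^{k-1}$ and using that $(r,s)$ is a uniformly random ordered pair of distinct indices, I would first average over $r$ with $s$ fixed: since $\langle y^k-x^\dag,a_s\rangle=0$,
\begin{equation*}
\mathbb{E}_r\!\left[\frac{\langle y^k-x^\dag,a_r\rangle^2}{1-\mu_k^2}\,\Big|\,s\right]\ \ge\ \frac{1+D}{I-1}\sum_{r\ne s}\langle y^k-x^\dag,a_r\rangle^2\ =\ \frac{1+D}{I-1}\,\|A(y^k-x^\dag)\|^2\ \ge\ \frac{(1+D)\,\lambda_{\min}(A^\top A)}{I-1}\,\|y^k-x^\dag\|^2.
\end{equation*}
Substituting into the identity of Step 1, using $\|y^k-x^\dag\|^2=\|x^{k-1}-x^\dag\|^2-\langle x^{k-1}-x^\dag,a_s\rangle^2$, then averaging over $s$ with $\mathbb{E}_s\langle x^{k-1}-x^\dag,a_s\rangle^2=\|A(x^{k-1}-x^\dag)\|^2/\|A\|_F^2\ge(\lambda_{\min}(A^\top A)/\|A\|_F^2)\,\|x^{k-1}-x^\dag\|^2$, one obtains a one-step linear contraction; iterating it and taking total expectation gives the estimate \eqref{TSS2}.

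\medskip
\noindent\textbf{Main obstacle.} The hard part is Step 2: isolating precisely the quantity $g(|\mu_k|)=\mu_k^2(1-|\mu_k|)/(1+|\mu_k|)$, rather than a cruder $\mu_k^2$- or $1/(1-\mu_k^2)$-type bound, is exactly what makes the constant $D$ — with its minimum over the two endpoints $\delta(A),\Delta(A)$ — appear, and the unimodality of $g$ is what licenses reducing to those endpoints. A secondary but genuinely necessary technical point is the order of the two averagings: one must fix $s$, average $\langle y^k-x^\dag,a_r\rangle^2$ over the remaining $I-1$ rows, and only then average over $s$; expanding the square and averaging directly would produce the indefinite cross term $\mathbb{E}[\mu_k\langle x^{k-1}-x^\dag,a_r\rangle\langle x^{k-1}-x^\dag,a_s\rangle]$, which is not controllable by $\lambda_{\min}(A^\top A)$ alone and would destroy the coherence gain. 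Finally, retaining the denominator $I-1=\|A\|_F^2-1$ (rather than weakening it to $\|A\|_F^2$) in the $r$-average is what allows the precise constant in \eqref{TSS2} — as opposed to a slightly weaker product form — to come out.
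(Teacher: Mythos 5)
Your Steps 1 and 2 are sound: the two-projection identity is exactly the standardized form of Lemma \ref{le:3.3} (i), (iii), (v), and the bound $\frac{1}{1-\mu^2}\ge 1+g(|\mu|)$ together with the endpoint argument for the unimodal $g(t)=\frac{t^2(1-t)}{1+t}$ correctly gives $\frac{1}{1-\mu_k^2}\ge 1+D$. The nested averaging in Step 3 (over $r$ with $s$ fixed, then over $s$) is also correct and yields, per iteration and with $\|A\|_F^2=I$,
\begin{equation*}
\mathbb{E}\left[\|x^{k}-x^\dag\|^2\,\big|\,x^{k-1}\right]\le\left(1-\frac{(1+D)\lambda_{\min}}{I-1}\right)\left(1-\frac{\lambda_{\min}}{I}\right)\|x^{k-1}-x^\dag\|^2,
\end{equation*}
granting the first factor is nonnegative (a caveat you should state, along with $\Delta(A)<1$ so that $1-\mu_k^2\neq0$). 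But note this is a product-form bound: it is essentially a weaker variant of the paper's Theorem \ref{Th1} for Algorithm 3 (which has $\frac{1}{1-\delta^2(A)}\ge 1+D$ where you have $1+D$), obtained by the same route the paper uses there. The paper does not prove Theorem \ref{th-tss} by this route at all; it quotes it from \cite{NW2013}, whose argument keeps the two-row structure and produces the additive correction $-\frac{\lambda_{\min}D}{\|A\|_F^2}$ to $\left(1-\frac{\lambda_{\min}}{\|A\|_F^2}\right)^2$ rather than a product of two contraction factors.

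The genuine gap is the final assertion that iterating your contraction ``gives the estimate \eqref{TSS2}''. Your per-step factor dominates the bracket in \eqref{TSS2} only if
\begin{equation*}
\left(1-\frac{(1+D)\lambda_{\min}}{I-1}\right)\left(1-\frac{\lambda_{\min}}{I}\right)\le\left(1-\frac{\lambda_{\min}}{I}\right)^2-\frac{\lambda_{\min}D}{I},
\end{equation*}
and expanding shows this is equivalent to $\left(1-\frac{\lambda_{\min}}{I}\right)\frac{1+ID}{I-1}\ge D$, i.e.\ to $\lambda_{\min}\le\frac{I(1+D)}{1+ID}$ (roughly $\lambda_{\min}\lesssim 1+1/D$). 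Nothing in your argument supplies this spectral--coherence restriction, and it is not automatic; comparing a product-form rate with the bracket of \eqref{TSS2} is precisely what the paper's Theorem 3.2 is devoted to, and even for its stronger factor the inequality is established only under the condition $2\le r\le I\le 1+\frac{9}{4}\left(1+\sqrt{1-\frac{8}{9r}}\right)^2r^2(r-1)$. So as written you have proved a valid Theorem \ref{Th1}-type linear rate for the two-subspace method, but not the stated estimate \eqref{TSS2}; to obtain \eqref{TSS2} itself you must either justify the missing comparison (under suitable hypotheses) or follow the Needell--Ward expansion, in which the $\left(1-\frac{\lambda_{\min}}{\|A\|_F^2}\right)^2$ term and the $D$-gain appear additively instead of as a product of two averaged factors.
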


\noindent
Comparing the estimates \eqref{RK-e} and (\ref{TSS2}), it is easy to see  that the two-subspace Kaczmarz method  improves the randomized Kaczmarz algorithm if any rows of $A$ are highly correlated.
\vskip 1mm

The inertial {extrapolation}  firstly introduced by Polyak \cite{Po1964} has been widely used to accelerate {the convergence speed of iterative algorithms} (see, e.g., \cite{Dong} and \cite{GOSB}) since the cost of each iteration stays basically unchanged.
To introduce the inertial extrapolation, we start by introducing the term {\bf ``basic algorithm"}. Consider the algorithmic operator ${\bf\Psi}: \mathbb{R}^J\rightarrow\mathbb{R}^I $  working iteratively by
\begin{equation}
\label{BA}
x^{k+1}={\bf\Psi}(x^k),\,\,\,\,\,\,\forall k\geq 0,
\end{equation}
which is denoted as the basic algorithm.
The {\bf inertial version} of the basic algorithm \eqref{BA} is defined as follows:
\begin{equation}
\label{iBA}
x^{k+1}={\bf\Psi}(x^k+\alpha_k(x^k-x^{k-1})),\,\,\,\,\,\,\forall k\geq 1,
\end{equation}
where $\alpha_k(x^k-x^{k-1})$ is referred to as the inertial term and $\alpha_k$ as the inertial parameter.  The main feature of inertial type methods is that the next iterate is defined by making use of the previous two iterates.
The inertial parameter sequence $\{\alpha_k\}_{k=0}^\infty$ is key to inertial type algorithms  and a
careful choice   is known to accelerate the  convergence rate of the objective function from $O(1/k)$ to $O(1/k^2)$ or $o(1/k^2)$  for a large class of {first-order}
algorithms of the convex optimization (see \cite{Atto16,Be09,Nest83} for details). Since its inception,  inertial type method has received a great deal of attention of
many authors, who improved it in various ways.
Mu and Peng \cite{Mu-Peng} firstly introduced alternated inertial methods applying inertia every other iteration, which was extended to solve the composite convex optimization problem \cite{Iutzeler} and variational inequalities \cite{Shehu}. Recently multi-step inertial methods \cite{Liang} were proposed based on the idea of using previous more than two  iterates to accelerate the convergence rate.
\vskip 1mm

The purpose of this paper is three folds. Firstly, we give a new convergence rate estimate of the two-subspace Kaczmarz method by rewriting it as an alternated inertial randomized Kaczmarz algorithm and show that it is better  than that in \cite{NW2013}  under a mild condition.
Secondly, we introduce a multi-step inertial randomized Kaczmarz algorithm by combining the inertial extrapolation and the randomized Kaczmarz algorithm and its convergence rate is proved to be faster than the two-subspace Kaczmarz method. Finally,  numerical experiments support the theory results and illustrate that the proposed algorithm outperform the two-subspace Kaczmarz method  in solving coherent linear systems.

The paper is structured as follows. In the next section, we introduce some fundamental  concepts and tools, which are  needed in the subsequent sections. In Section 3, we rewrite the two-subspace Kaczmarz method as  the alternated inertial randomized Kaczmarz algorithm and present a new convergence rate. In Section 4,  we propose the multi-step inertial randomized Kaczmarz algorithm as a further extension of the alternated inertial randomized Kaczmarz algorithm. Numerical experiments are presented in Section 5 to show the efficiency and advantage of our proposed algorithm.

\section{Preliminaries}

In this section, some fundamental tools are listed  that will be used in the convergence analysis of the proposed methods.

We will  use the following notations:
\begin{enumerate}
\item [$\bullet$] $\mathcal{I}=\{1,2,\cdots,I\}$ and $\Lambda=\{(j,i)\,|\,j,i\in \mathcal{I}\,{\rm with}\,j\neq i\}$ denote two index sets, respectively.
\item [$\bullet$]$\lambda_{\min}(A^\top A)$ denotes the minimum nonzero eigenvalue of $A^{\top}A$.
\item [$\bullet$]$\|A\|_F$ denotes the Frobenius norm of $A$.
\item [$\bullet$] $\alpha_F (A)=\max_{i\in\mathcal{I}}\{\|A\|^2_F-\|A_i\|^2\}$ and $\Upsilon=\sum_{(j,i)\in \Lambda}\|A_j\|^2\|A_i\|^2$ denote two constants, respectively.
\item [$\bullet$] $x^\dag$ denotes the minimum norm solution of the problem (\ref{Ax=b}).
\item [$\bullet$] $\mathbb{E}\left[\cdot\right]$ and $\mathbb{E}\left[\cdot\,|\,\cdot\right]$ denote the full expectation and the conditional expectation of a random variable, respectively.
\end{enumerate}

For a given  nonempty closed convex subset $C$ of $\mathbb{R}^{J}$, the projection operator onto $C$
is the mapping from $\mathbb{R}^{J}$ to $C$ that assigns to each $x\in \mathbb{R}^{J}$
the unique point in $C$, denoted $P_Cx$,  closest to $x$ that is,
$$P_Cx=\arg\min_{z\in C}\|z-x\|^2.$$

A subset $H$ of $\mathbb{R}^{J}$ is called a hyperplane if $H$ is  of the form
\begin{equation}\label{eq:2.1}
H:=\{x\in \mathbb{R}^{J} \mid \langle x,v \rangle=d\},
\end{equation}
where $v$ is a fixed nonzero vector in $\mathbb{R}^{J}$ and $d$ is a number in $\mathbb{R}$. Obviously, a hyperplane is closed and convex,
and we have the following closed-form formula for projections onto  a hyperplane $H$ (see \cite[Chapter 25]{BC10} or \cite{hyd}).
\begin{lemma}
Let $H$ be a hyperplane of the form \eqref{eq:2.1}. Then, for any $u\in \mathbb{R}^{J}$, we have
\begin{equation}\label{eq:2.2}
P_{H}u=u-\frac{\langle u,v \rangle-d}{\|v\|^{2}}v.
\end{equation}
\end{lemma}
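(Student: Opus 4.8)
The plan is to exhibit the candidate point $w := u-\dfrac{\langle u,v\rangle-d}{\|v\|^{2}}v$ and verify directly that it solves the defining minimization problem $P_{H}u=\arg\min_{z\in H}\|z-u\|^{2}$. First I would check feasibility: since $v\neq 0$ we have $\|v\|^{2}>0$, so $w$ is well defined, and a one-line computation gives $\langle w,v\rangle=\langle u,v\rangle-\dfrac{\langle u,v\rangle-d}{\|v\|^{2}}\|v\|^{2}=d$, hence $w\in H$.

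Next I would record the orthogonality that makes $w$ the projection. By construction $u-w=\dfrac{\langle u,v\rangle-d}{\|v\|^{2}}v$ is a scalar multiple of $v$. For any other point $z\in H$ the difference lies in the direction subspace of the hyperplane, that is $\langle z-w,v\rangle=\langle z,v\rangle-\langle w,v\rangle=d-d=0$, and therefore $\langle u-w,z-w\rangle=0$.

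The final step is a Pythagorean expansion: for every $z\in H$,
\[
\|z-u\|^{2}=\|(z-w)+(w-u)\|^{2}=\|z-w\|^{2}+2\langle z-w,w-u\rangle+\|w-u\|^{2}=\|z-w\|^{2}+\|w-u\|^{2},
\]
where the cross term vanishes by the orthogonality just established. Hence $\|z-u\|^{2}\geq\|w-u\|^{2}$ with equality exactly when $z=w$, so $w$ is the unique minimizer of $\|z-u\|^{2}$ over the nonempty closed convex set $H$, i.e. $P_{H}u=w$, which is precisely \eqref{eq:2.2}.

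There is essentially no real obstacle here; the only points deserving a word of care are that $\|v\|^{2}\neq 0$ (so the formula makes sense) and that the cross term in the Pythagorean identity genuinely vanishes, which is exactly the statement $u-w\parallel v$ and $v\perp(z-w)$. An equivalent route, which I could use instead, is a Lagrange-multiplier argument for the equality-constrained problem $\min\{\|z-u\|^{2}:\langle z,v\rangle=d\}$: stationarity forces $z=u+\lambda v$, and substituting into the constraint yields $\lambda=(d-\langle u,v\rangle)/\|v\|^{2}$; strict convexity of the objective together with nonemptiness of the feasible set then guarantees that this stationary point is the unique global minimizer, giving the same formula.
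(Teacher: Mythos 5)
Your proof is correct and complete: feasibility of the candidate point, orthogonality of $u-w$ to $H-w$, and the Pythagorean expansion together give exactly the standard argument. The paper itself does not prove this lemma (it simply cites \cite{BC10} and \cite{hyd}), and your verification is essentially the same reasoning as the cited sources and as checking the hyperplane characterization \eqref{eq:2.4} of Lemma \ref{le:2.2} directly, so there is nothing to add.
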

\noindent
In what follows, we denote by $P_i$ the projection operator onto $H_{i}=\{x{\in\mathbb{R}^{J}}\,|\, A_i x=b_i\},i=1,2,\cdots,I.$
\vskip 1mm

The following property is a well-known characterization of projections.
\begin{lemma} \label{le:2.2} {\rm(\cite[Section 3]{GR84})}
Let $C$ be a closed convex subset of $\mathbb{R}^J.$ Given  $x\in \mathbb{R}^J$ and  $z\in C$. Then $z=P_{C}x$ if and only if
\begin{equation}\label{eq:2.3}
\langle x-z,y-z \rangle\leq0,\quad \forall\,y\in C.
\end{equation}
Moreover, if $C$ is a hyperplane, then $z=P_{C}x$ if and only if
\begin{equation}\label{eq:2.4}
\langle x-z,y-z \rangle=0,\quad\forall\,y\in C.
\end{equation}

\end{lemma}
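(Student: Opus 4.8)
The plan is to prove both equivalences by the standard first-order optimality argument for projections onto convex sets. For the convex case I would first dispatch the easy direction: assuming the variational inequality $\langle x-z,y-z\rangle\le 0$ holds for all $y\in C$, I expand $\|y-x\|^2=\|y-z\|^2+2\langle y-z,z-x\rangle+\|z-x\|^2$ and observe that $\langle y-z,z-x\rangle=-\langle x-z,y-z\rangle\ge 0$, so $\|y-x\|^2\ge\|z-x\|^2$ for every $y\in C$. Since $z\in C$, this exhibits $z$ as a minimizer of $\|\cdot-x\|^2$ over $C$, hence $z=P_Cx$ (the minimizer being unique by strict convexity of the squared norm on the closed convex set $C$).

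For the converse I would use convexity of $C$ to manufacture admissible perturbations. Given $z=P_Cx$ and an arbitrary $y\in C$, the point $z_t:=z+t(y-z)=(1-t)z+ty$ lies in $C$ for every $t\in[0,1]$. Minimality of $z$ gives $\|z-x\|^2\le\|z_t-x\|^2=\|z-x\|^2+2t\langle z-x,y-z\rangle+t^2\|y-z\|^2$, so that $0\le 2\langle z-x,y-z\rangle+t\|y-z\|^2$ for all $t\in(0,1]$. Letting $t\to 0^+$ yields $\langle z-x,y-z\rangle\ge 0$, i.e. $\langle x-z,y-z\rangle\le 0$, which is the asserted inequality.

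For the hyperplane refinement I would exploit that a hyperplane $H$ is affine and therefore symmetric about each of its points: if $z,y\in H$, then the reflection $2z-y$ also lies in $H$. Applying the inequality already established to both $y$ and $2z-y$ gives $\langle x-z,y-z\rangle\le 0$ together with $\langle x-z,(2z-y)-z\rangle=-\langle x-z,y-z\rangle\le 0$, which forces $\langle x-z,y-z\rangle=0$ for all $y\in H$. Conversely, an equality trivially implies the $\le 0$ inequality, so the convex characterization applies in reverse and gives $z=P_Hx$.

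I do not expect a genuine obstacle: the only points needing (minor) care are remaining inside $C$ while perturbing, which is exactly what convexity provides, and extracting an equality in the affine case, which the reflection through $z$ handles; the uniqueness of the projection used implicitly in the first paragraph is a consequence of the strict convexity of $u\mapsto\|u-x\|^2$ restricted to the closed convex set $C$.
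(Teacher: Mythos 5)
Your proof is correct and complete. The paper itself offers no proof of this lemma --- it is quoted from the cited reference \cite[Section 3]{GR84} --- and your argument is the standard one that any such source would give: the expansion $\|y-x\|^2=\|y-z\|^2+2\langle y-z,z-x\rangle+\|z-x\|^2$ for sufficiency, the convex perturbation $z_t=(1-t)z+ty$ with $t\to 0^+$ for necessity, and the reflection $2z-y$ through $z$ (which stays in the hyperplane because a hyperplane is an affine set) to upgrade the inequality to an equality. All the points needing care --- admissibility of the perturbations, uniqueness of the minimizer via strict convexity, and the affine symmetry --- are handled correctly, so there is nothing to fix.
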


\begin{lemma} \label{le:2.3} \cite{GR2015}
Let $A\in \mathbb{R}^{I\times J}$ be a matrix with row vectors $A_1, A_2, \cdots, A_I$. Then the inequality
\begin{equation}\label{eq:2.5}
\|Ax\|^2\geq \lambda_{\min}(A^{\top}A)\|x\|^2
\end{equation}
holds for all  $x\in {\rm span}\{A_1^\top, A_2^\top, \cdots, A_I^\top\}$.\emph{}
\end{lemma}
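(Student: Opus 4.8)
The statement is that $\|Ax\|^2 \geq \lambda_{\min}(A^\top A)\|x\|^2$ for all $x \in \operatorname{span}\{A_1^\top,\dots,A_I^\top\}$, where $\lambda_{\min}(A^\top A)$ is the \emph{smallest nonzero} eigenvalue. The plan is to exploit the fact that $\operatorname{span}\{A_1^\top,\dots,A_I^\top\}$ is precisely the range of $A^\top$, which is the orthogonal complement of $\ker A$, and then diagonalize $A^\top A$ on that subspace.

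\medskip
\noindent First I would record the identity $\|Ax\|^2 = \langle Ax, Ax\rangle = \langle x, A^\top A x\rangle$, so the inequality is equivalent to $\langle x, A^\top A x\rangle \geq \lambda_{\min}(A^\top A)\|x\|^2$ on the relevant subspace. Next I would observe that $\operatorname{span}\{A_1^\top,\dots,A_I^\top\} = \operatorname{ran}(A^\top) = (\ker A)^\perp = (\ker A^\top A)^\perp$, the last equality because $A^\top A x = 0 \iff \|Ax\|^2 = 0 \iff Ax = 0$. Since $A^\top A$ is symmetric positive semidefinite, the spectral theorem gives an orthonormal basis of $\mathbb{R}^J$ consisting of eigenvectors, with eigenvalues $0 = \lambda_{k+1} = \dots = \lambda_J$ corresponding to $\ker(A^\top A)$ and the strictly positive eigenvalues $\lambda_1 \geq \dots \geq \lambda_k = \lambda_{\min}(A^\top A) > 0$ spanning $(\ker A^\top A)^\perp$.

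\medskip
\noindent Then, for $x \in \operatorname{span}\{A_1^\top,\dots,A_I^\top\} = (\ker A^\top A)^\perp$, I would expand $x = \sum_{j=1}^k c_j u_j$ in the eigenvectors $u_j$ with positive eigenvalues, compute $\langle x, A^\top A x\rangle = \sum_{j=1}^k \lambda_j c_j^2 \geq \lambda_k \sum_{j=1}^k c_j^2 = \lambda_{\min}(A^\top A)\|x\|^2$, using $\lambda_j \geq \lambda_k$ for $j \leq k$ and Parseval. This yields the claim.

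\medskip
\noindent There is no serious obstacle here; the only point requiring a little care is making sure the subspace $\operatorname{span}\{A_1^\top,\dots,A_I^\top\}$ is correctly identified with $(\ker A^\top A)^\perp$ so that the zero eigenvalue of $A^\top A$ never enters the lower bound — this is exactly why the hypothesis restricting $x$ to the row space is essential, and why one uses the smallest \emph{nonzero} eigenvalue rather than the smallest eigenvalue (which would be $0$ whenever $A$ does not have full column rank, making the inequality vacuous). If a self-contained argument avoiding explicit spectral decomposition is preferred, one can instead use the Courant–Fischer min–max characterization of $\lambda_{\min}(A^\top A)$ restricted to $(\ker A^\top A)^\perp$, but the diagonalization argument above is the most transparent.
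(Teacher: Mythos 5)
Your proof is correct. The paper itself states this lemma without proof, simply citing \cite{GR2015}, so there is no in-paper argument to compare against; your spectral-theorem argument is the standard one. The key identifications you make --- $\|Ax\|^2=\langle x, A^\top A x\rangle$, ${\rm span}\{A_1^\top,\cdots,A_I^\top\}={\rm ran}(A^\top)=(\ker A)^\perp=(\ker A^\top A)^\perp$, and the expansion of $x$ in the eigenvectors of $A^\top A$ with strictly positive eigenvalues --- are all accurate, and they correctly explain why the restriction to the row space is what allows the smallest \emph{nonzero} eigenvalue to appear in the bound. Nothing is missing; the argument is complete and self-contained.
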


Throughout the rest of this paper, all discussions are based on the following two basic assumptions:
\begin{enumerate}
  \item [({H1})] The system of linear equations \eqref{Ax=b} is consistent, that is, its solution set, {denoted by $S$,} is nonempty.
  \item [({H2})] Any two equations in \eqref{Ax=b} are different, that is, the same equation is not allowed to appear twice in \eqref{Ax=b}.
\end{enumerate}

\section{Alternated inertial randomized  Kaczmarz algorithm}

{
In this section,  we will give a better convergence rate estimate of the two-subspace Kaczmarz method than (\ref{TSS2}) by regarding it as an alternated inertial randomized  Kaczmarz algorithm.
\vskip 2mm

To this end, we first introduce an alternated inertial randomized  Kaczmarz algorithm as follows.
\begin{algorithm}{\rm \caption{Alternated inertial randomized Kaczmarz algorithm}}
\label{Al:3.1}
\begin{algorithmic}
\vskip 2mm
{
\STATE {\textbf{Input:} $A$ and $b$}
\STATE {\textbf{Step 0:} Take $x^0\in {\rm span}\{A_1^\top, A_2^\top, \cdots, A_I^\top\}$  arbitrarily and set $k=1$.\\
\STATE {\textbf{Step 1:} Select an index $(j_k, i_k)\in \Lambda$ with probability $p_{(j_k,i_k)}=\frac{\|A_{j_k}\|^2\|A_{i_k}\|^2}{\Upsilon}$.}\\
\STATE {\textbf{Step 2:}  Set
\begin{equation}\label{eq:3.1}
y^k=P_{j_k}x^{k-1},
\end{equation}
\qquad\qquad  Set
\begin{equation}\label{eq:3.2}
\beta_k=\frac{(A_{i_k}y^k-b_{i_k})\langle A_{j_k}, A_{i_k}\rangle}{\|A_{j_k}\|^2\|A_{i_k}\|^2-\langle A_{j_k}, A_{i_k}\rangle^2},
\end{equation}
\qquad\qquad  Calculate
\begin{equation}\label{eq:3.3}
x^k=P_{i_k}(y^k+\beta_k A_{j_k}^\top).
\end{equation}
}
\STATE {\textbf{Step 3:}
Set $k:=k+1$ and return to Step 1.
}
}
}
\end{algorithmic}
\end{algorithm}

\vskip 100mm
\begin{remark}
\rm
We give three important characteristics of Algorithm 3.

(i)   $\beta_k$ given by (\ref{eq:3.2}) is well-defined for all $k\geq 0$. Indeed, since $j_k\neq i_k$ can be guaranteed by Algorithm 3, it is easy to see that $\|A_{j_k}\|^2\|A_{i_k}\|^2-\langle A_{j_k}, A_{i_k}\rangle^2\neq 0$ under the  assumption (H2).

(ii) Algorithm 3 is essentially an alternated inertial algorithm. In fact, from (\ref{eq:3.1}) and (\ref{eq:2.2}), we get
\begin{equation} \label{e3.2}
y^k-x^{k-1}=-\frac{A_{j_k}x^{k-1}-b_{j_k}}{\|A_{j_k}\|^2}A_{j_k}^\top.
\end{equation}
Obviously,  in the case where $y^k\neq x^{k-1}$, we have
\begin{equation}\label{new-add1}
y^k+\beta_k A_{j_k}^\top=y^k+\beta^\prime_k (y^k-x^{k-1}),
\end{equation}
where
\begin{equation} \label{e3.5}
\beta^\prime_k=-\frac{(A_{i_k}y^k-b_{i_k})\|A_{j_k}\|^2\langle A_{j_k}, A_{i_k}\rangle}{(A_{j_k} x^{k-1}-b_{j_k})\left[\|A_{j_k}\|^2\|A_{i_k}\|^2-\langle A_{j_k}, A_{i_k}\rangle^2\right]}.
\end{equation}
Consequently, (\ref{eq:3.3}) can be rewritten as the form
\begin{equation}\label{new-add2}
x^k=P_{i_k}(y^k+\beta^\prime_k (y^k-x^{k-1})).
\end{equation}
The formula (\ref{new-add2}) means that there is an inertial extrapolation in $x^k$. Since there is no inertial extrapolation in $y^k$, Algorithm 3 is an alternated inertial algorithm.

(iii) When $A$ is standardized, it is easy to verify that Algorithm 3 equals to the two-subspace Kaczmarz method. The reason that we prefer the form  of Algorithm 3 instead of the two-subspace Kaczmarz method in this section is that the former is convenient for us to make full use of the properties of the projection operator to analyze the convergence rate more precisely, which can be seen from Lemma \ref{le:3.3} and its proof  below.
\end{remark}
}

We now give the following fundamental result, which is crucial  to estimate the convergence rate of Algorithm 3.

\begin{lemma} \label{le:3.3}
Let $\{x^k\}_{k=0}^\infty$ be the  sequence generated by Algorithm {\rm3}. Then for any $x^* \in S$ and $k \geq 1$, the following statements hold:
\begin{enumerate}
  \item [{\rm(i)}]  $  \|y^k-x^*\|^2=\|x^{k-1}-x^*\|^2-\frac{\displaystyle|A_{j_k}x^{k-1}-b_{j_k}|^2}
      {\displaystyle\|A_{j_k}\|^2};$

  \item [{\rm(ii)}] $\|y^k-x^{k-1}\|=\frac{\displaystyle|A_{j_k}x^{k-1}-b_{j_k}|}{\displaystyle\|A_{j_k}\|}$;
  \item [{\rm(iii)}] $\left\langle A^\top_{j_k},  y^k-x^*\right\rangle =0$;
  \item [{\rm(iv)}] The inertial parameter sequence $\{\beta_k\}_{k=1}^\infty$ in \eqref{eq:3.2} is optimal in the sense that $\beta_k$ makes $\|x^k-x^*\|^2$ obtain its minimum value;
  \item [{\rm(v)}]
\begin{equation*}
\|x^k-x^*\|^2=\|y^k-x^*\|^2-\frac{\displaystyle|A_{i_k}y^k-b_{i_k}|^2}{\displaystyle\|A_{i_k}\|^2}\frac{\displaystyle1}
{\displaystyle1-\left\langle \frac{\displaystyle A_{j_k}}{\displaystyle\|A_{j_k}\|},\frac{\displaystyle A_{i_k}}{\displaystyle\|A_{i_k}\|}\right\rangle^2};
\end{equation*}

 \item [(vi)]

 \begin{equation*}
\|x^k-y^k\|^2=\frac{\displaystyle|A_{i_k}y^k-b_{i_k}|^2}{\displaystyle\|A_{i_k}\|^2}\frac{\displaystyle1}
{\displaystyle1-\left\langle \frac{\displaystyle A_{j_k}}{\displaystyle\|A_{j_k}\|},\frac{\displaystyle A_{i_k}}{\displaystyle\|A_{i_k}\|}\right\rangle^2}.
\end{equation*}

\end{enumerate}
\end{lemma}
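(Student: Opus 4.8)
The plan is to establish the six statements in order, since each relies on the geometric properties of orthogonal projections onto hyperplanes (Lemmas in Section 2) and the earlier items feed the later ones. Statements (i) and (ii) are immediate consequences of the Pythagorean identity for projections: since $y^k = P_{j_k} x^{k-1}$ and $x^* \in S \subseteq H_{j_k}$, Lemma \ref{le:2.2} (the hyperplane case, identity \eqref{eq:2.4}) gives $\langle x^{k-1} - y^k, y^k - x^*\rangle = 0$, so $\|x^{k-1}-x^*\|^2 = \|x^{k-1}-y^k\|^2 + \|y^k-x^*\|^2$; combining this with the closed form \eqref{e3.2} for $y^k - x^{k-1}$, whose norm is exactly $|A_{j_k}x^{k-1}-b_{j_k}|/\|A_{j_k}\|$, yields both (i) and (ii) at once. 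For (iii), I would observe that $A_{j_k}^\top$ is the normal direction of the hyperplane $H_{j_k}$, and that both $y^k$ and $x^*$ lie in $H_{j_k}$ (the former by construction, the latter by consistency of the system); hence $\langle A_{j_k}^\top, y^k - x^*\rangle = A_{j_k}y^k - A_{j_k}x^* = b_{j_k} - b_{j_k} = 0$.

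For the remaining three items I would work with the point $z^k := y^k + \beta A_{j_k}^\top$ for a free scalar parameter $\beta$, and set $x^k(\beta) := P_{i_k} z^k$. The key computation is to expand $\|x^k(\beta) - x^*\|^2$ as a function of $\beta$. Using the Pythagorean identity again at the second projection, $\|x^k(\beta) - x^*\|^2 = \|z^k - x^*\|^2 - \|z^k - x^k(\beta)\|^2$, and then $\|z^k - x^*\|^2 = \|y^k - x^*\|^2 + \beta^2\|A_{j_k}\|^2$ by (iii) (the cross term vanishes), while the projection-drop term $\|z^k - P_{i_k}z^k\|^2 = |A_{i_k}z^k - b_{i_k}|^2/\|A_{i_k}\|^2 = |A_{i_k}y^k - b_{i_k} + \beta\langle A_{i_k}, A_{j_k}\rangle|^2/\|A_{i_k}\|^2$. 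This makes $\|x^k(\beta)-x^*\|^2$ an explicit quadratic in $\beta$ with positive leading coefficient $\|A_{j_k}\|^2 - \langle A_{i_k},A_{j_k}\rangle^2/\|A_{i_k}\|^2 > 0$ (positivity by the assumption (H2), as noted in Remark 3.1(i)); minimizing in $\beta$ and simplifying should reproduce precisely the formula \eqref{eq:3.2} for $\beta_k$, proving (iv). Substituting this optimal $\beta_k$ back into the quadratic and collecting terms gives (v), and (vi) then follows from (v) via the Pythagorean identity $\|x^k - y^k\|^2 = \|y^k - x^*\|^2 - \|x^k - x^*\|^2$ (valid because $x^k = P_{i_k}z^k$, $y^k$ and $x^*$ all lie in the appropriate affine structure — more carefully, one uses $\|x^k - x^*\|^2 = \|z^k - x^*\|^2 - \|z^k - x^k\|^2$ together with (iii) to track the $\beta_k A_{j_k}^\top$ displacement).

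The main obstacle I anticipate is the bookkeeping in the quadratic-in-$\beta$ expansion: one must carefully keep the two orthogonality facts straight — $A_{j_k}^\top \perp (y^k - x^*)$ from (iii), used for the first projection's contribution, versus the non-orthogonality $\langle A_{i_k}, A_{j_k}\rangle \neq 0$ that produces the coherence factor $1/(1 - \langle A_{j_k}/\|A_{j_k}\|, A_{i_k}/\|A_{i_k}\|\rangle^2)$ in (v) and (vi). Getting the denominator $\|A_{j_k}\|^2\|A_{i_k}\|^2 - \langle A_{j_k},A_{i_k}\rangle^2$ to match between the definition of $\beta_k$ and the minimizer of the quadratic, and confirming that the residual after the two projections is exactly the stated expression rather than off by a factor of $\|A_{j_k}\|^2$ or $\|A_{i_k}\|^2$, will require care but is otherwise routine algebra. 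No deeper idea is needed beyond the two Pythagorean identities and the explicit projection formula \eqref{eq:2.2}.
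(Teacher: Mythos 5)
Your proposal is correct and takes essentially the same route as the paper: the closed-form projection formula together with the orthogonality in (iii) turns $\|P_{i_k}(y^k+\beta A_{j_k}^\top)-x^*\|^2$ into an explicit quadratic in $\beta$, whose minimization gives exactly \eqref{eq:3.2} for (iv), and substituting $\beta=\beta_k$ gives (v), just as in the paper. The only step to tighten is (vi): the Pythagorean identity $\|y^k-x^*\|^2=\|x^k-y^k\|^2+\|x^k-x^*\|^2$ you invoke does hold, but the clean justification is that $x^k$ lies on both hyperplanes $H_{j_k}$ and $H_{i_k}$ (so $x^k-x^*$ is orthogonal to ${\rm span}\{A_{j_k}^\top,A_{i_k}^\top\}$, which contains $x^k-y^k$), not a vague ``affine structure'' argument; your fallback of computing the displacement $x^k-y^k$ explicitly is precisely the paper's derivation via its formula \eqref{eq:3.9}.
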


\begin{proof} (i) Noting the fact that $A_{j_k}x^*=b_{j_k}$, we get
$$A_{j_k}(x^{k-1}-x^*)=A_{j_k}x^{k-1}-A_{j_k}x^*=A_{j_k}x^{k-1}-b_{j_k},$$
and consequently, this together with (\ref{eq:3.1}) and (\ref{eq:2.2}) implies that

\begin{equation}\label{eq:3.4}
\aligned
&\|y^k-x^*\|^2\\
=&\|P_{j_k}x^{k-1}-x^*\|^2\\
=&\|x^{k-1}-x^*-\frac{A_{j_k}x^{k-1}-b_{j_k}}{\|A_{j_k}\|^2}A_{j_k}^\top\|^2\\
=&\|x^{k-1}-x^*\|^2-2\left\langle x^{k-1}-x^*,\frac{A_{j_k}x^{k-1}-b_{j_k}}{\|A_{j_k}\|^2}A_{j_k}^\top\right\rangle +\frac{| A_{j_k}x^{k-1}-b_{j_k}|^2}{\|A_{j_k}\|^2}\\
=&\|x^{k-1}-x^*\|^2-\frac{|A_{j_k}x^{k-1}-b_{j_k}|^2}{\|A_{j_k}\|^2}.
\endaligned
\end{equation}

(ii) From (\ref{e3.2}),  we have
$$\|y^k-x^{k-1}\|=\frac{|A_{j_k}x^{k-1}-b_{j_k}|}{\|A_{j_k}\|}.$$

(iii) Noting $A_{j_k}x^*=b_{j_k}$, this conclusion is a direct consequence of  (\ref{e3.2}).

(iv) Obviously,  in order to make Algorithm 3 obtain the optimal convergence rate, for the current $y^k$, $\beta_k$ should be chosen  such that $\|x^k-x^*\|^{2}$ reaches its minimum value. To show that the inertial parameter sequence $\{\beta_k\}_{k=1}^\infty$ in (\ref{eq:3.2}) is optimal,  it suffices to verify that
$$ \beta_k=\arg \min\limits_{\beta \in \mathbb{R}}\|P_{i_k}(y^k+\beta A_{j_k}^\top)-x^*\|^2$$
holds for all $k\geq 1$.

Similar to the derivation of (\ref{eq:3.4}), we get
\begin{align*}
&\|P_{i_k}(y^k+\beta A_{j_k}^\top)-x^*\|^2\\
=&\|y^k-x^*+\beta A_{j_k}^\top\|^2-\frac{|A_{i_k}(y^k+\beta A_{j_k}^\top) -b_{i_k}|^2}{\|A_{i_k}\|^2}.
\end{align*}
Using (iii), one has
\begin{align*}
\|y^k-x^*+\beta A_{j_k}^\top\|^2=&\|y^k-x^*\|^2+\beta^2\|A_{j_k}\|^2+2\beta \langle y^k-x^*,A_{j_k}^\top\rangle\\
=&\|y^k-x^*\|^2+\beta^2\|A_{j_k}\|^2.
\end{align*}
On the other hand,
\begin{align*}
|A_{i_k}(y^k+\beta A_{j_k}^\top) -b_{i_k}|^2=&|A_{i_k}y^k-b_{i_k}|^2+\beta^2\langle A_{j_k}, A_{i_k}\rangle^2\\
&+2\beta\langle A_{j_k}, A_{i_k}\rangle (A_{i_k}y^k-b_{i_k}).
\end{align*}
Combining  above three equalities, we have
\begin{equation}\label{eq:3.5}
\aligned
& \|P_{i_k}(y^k+\beta A_{j_k}^\top)-x^*\|^2\\
=&\|y^k-x^*\|^2-\frac{|A_{i_k}y^k-b_{i_k}|^2}{\|A_{i_k}\|^2}\\
&+\beta^2\left (\|A_{j_k}\|^2-\frac{\langle A_{j_k}, A_{i_k}\rangle^2}{\|A_{i_k}\|^2}\right )-2\beta \frac{\langle A_{j_k}, A_{i_k}\rangle(A_{i_k}y^k-b_{i_k})}{\|A_{i_k}\|^2}.
\endaligned
\end{equation}
Set
\begin{equation}\label{eq:3.6}
\aligned
h(\beta):=&\beta^2\left (\|A_{j_k}\|^2-\frac{\langle A_{j_k}, A_{i_k}\rangle^2}{\|A_{i_k}\|^2}\right )\\
&-2\beta \frac{\langle A_{j_k}, A_{i_k}\rangle(A_{i_k}y^k-b_{i_k})}{\|A_{i_k}\|^2},\,\,\,\forall\beta\in\mathbb{R}.
\endaligned
\end{equation}
For the current iterate $y^k$, noting that
$$\|y^k-x^*\|^2-\frac{|A_{i_k}y^k-b_{i_k}|^2}{\|A_{i_k}\|^2}$$
is a constant, it is easy to see from (\ref{eq:3.5}) and (\ref{eq:3.6}) that, $\|P_{i_k}(y^k+\beta A_{j_k}^\top)-x^*\|^2$ reaching its minimum value is equivalent to the function $h(\beta)$ reaching its minimum value. Note that $h(\beta)$ is a quadratic function of $\beta$, therefore, minimizing $h(\beta)$
yields that the $k$th optimal inertial parameter is given by  (\ref{eq:3.2}).

(v) The conclusion can be obtained directly by setting $\beta=\beta_k$ in (\ref{eq:3.5}).

(vi) Using (\ref{eq:3.2}) and (\ref{eq:3.3}), we have
\begin{equation}\label{eq:3.9}
x^k=y^k+\frac{(A_{i_k}y^k-b_{i_k})\left[\langle A_{j_k}, A_{i_k}\rangle A_{j_k}^\top-\|A_{j_k}\|^2A_{i_k}^\top\right] }{\|A_{j_k}\|^2\|A_{i_k}\|^2-\langle A_{j_k}, A_{i_k}\rangle^2}.
\end{equation}
The desired conclusion follows from (\ref{eq:3.9}).
    \end{proof}

\begin{remark}
\rm
(i) Lemma \ref{le:3.3} (iv) provides a reference for us to find $\beta_k$, whose idea is  just the  selection strategy of inertial parameters in some inertial type algorithms.

(ii)
The accelerating convergence effect of  Algorithm 3 can be clearly shown by  (v) and (vi) of Lemma \ref{le:3.3}. In fact, the iterative sequence generated by  Algorithm 1 only holds

$$
\|x^k-x^*\|^2= \|y^k-x^*\|^2-\frac{|A_{i_k}y^k-b_{i_k}|^2}{\|A_{i_k}\|^2},
$$
and
$$\|x^k-y^k\|^2=\frac{|A_{i_k}y^k-b_{i_k}|^2}{\|A_{i_k}\|^2}.$$
While, due to the inertia extrapolation, the iterative sequence generated by Algorithm 3 has  smaller $\|x^k-x^*\|^2 $ and bigger $\|x^k-y^k\|^2$. Indeed, from (v) and (vi) of Lemma \ref{le:3.3}, we get

        $$\|x^k-x^*\|^2\leq \|y^k-x^*\|^2-\frac{|A_{i_k}y^k-b_{i_k}|^2}{\|A_{i_k}\|^2}\frac{1}{1-\delta(A)^2},$$
and
$$\|x^k-y^k\|^2\geq\frac{|A_{i_k}y^k-b_{i_k}|^2}{\|A_{i_k}\|^2}\frac{1}{1-\delta(A)^2}.$$
Obviously, the larger $\delta(A)$, the better the acceleration effect of Algorithm 3.

        \end{remark}

Finally we analyze the  convergence  rate   of Algorithm 3,  i.e, the two-subspace Kaczmarz method  as follows.

\begin{theorem}\label{Th1}
    Choose $x^0\in {\rm span}\{A_1^\top, A_2^\top, \cdots, A_I^\top\}$ arbitrarily and let $\left\{x^k\right\}_{k=0}^{\infty}$ be a sequence generated by  Algorithm {\rm3}. Then $\left\{x^k\right\}_{k=0}^{\infty}$ converges linearly in expectation  to the minimum norm solution $x^\dag$. Precisely, for all $k\geq 1$, there holds the convergence rate estimate:

 \begin{equation} \label{e24}
\mathbb{E}\left [\|x^{k}-x^\dag\|^2\right]\leq \left (1-\frac{\lambda_{\min}(A^\top A)}{\alpha_F (A)(1-\delta^2(A))}\right)^k\left(1-\frac{\lambda_{\min}(A^\top A)}{\|A\|_F^2}\right)^k\|x^{0}-x^\dag\|^2.
\end{equation}
     \end{theorem}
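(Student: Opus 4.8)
The plan is to combine the two per-iteration identities from Lemma~\ref{le:3.3}---part (i) for the intermediate projection $y^k = P_{j_k}x^{k-1}$, and part (v) for the final update $x^k$---into a single recursion for $\|x^k - x^*\|^2$, then take conditional expectation over the random index pair $(j_k,i_k)$, and finally iterate. Writing $r_{j_k} = |A_{j_k}x^{k-1}-b_{j_k}|$ and $r'_{i_k} = |A_{i_k}y^k - b_{i_k}|$, Lemma~\ref{le:3.3}(i) and (v) together give
\begin{equation*}
\|x^k - x^*\|^2 = \|x^{k-1}-x^*\|^2 - \frac{r_{j_k}^2}{\|A_{j_k}\|^2} - \frac{r_{i_k}'^2}{\|A_{i_k}\|^2}\cdot\frac{1}{1-\big\langle A_{j_k}/\|A_{j_k}\|,\,A_{i_k}/\|A_{i_k}\|\big\rangle^2}.
\end{equation*}
Since the inner-product term is bounded above by $\delta(A)^2$ (by definition of $\delta(A)$ as the minimum coherence, this bound is the one used in Remark~3.2), the factor $1/(1-\langle\cdot,\cdot\rangle^2)$ is at least $1/(1-\delta(A)^2) \ge 1$; so I can split the estimate into two pieces that will be handled by two separate ``Kaczmarz-type'' descent arguments.

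First I would take $\mathbb{E}[\cdot\mid x^{k-1}]$ of the term $r_{j_k}^2/\|A_{j_k}\|^2$. The marginal probability of selecting row $j_k = j$ under the joint law $p_{(j,i)} = \|A_j\|^2\|A_i\|^2/\Upsilon$ is $\sum_{i\neq j}\|A_j\|^2\|A_i\|^2/\Upsilon = \|A_j\|^2(\|A\|_F^2 - \|A_j\|^2)/\Upsilon$; bounding $\|A\|_F^2 - \|A_j\|^2 \le \alpha_F(A)$ and using $\Upsilon = \sum_{(j,i)\in\Lambda}\|A_j\|^2\|A_i\|^2$, a short computation should show the $y^k$-step alone contracts by the factor appearing in Theorem~\ref{th1.1}, i.e. $\mathbb{E}[\|y^k - x^*\|^2 \mid x^{k-1}] \le \big(1 - \lambda_{\min}(A^\top A)/\|A\|_F^2\big)\|x^{k-1}-x^*\|^2$, where Lemma~\ref{le:2.3} supplies $\|A(x^{k-1}-x^*)\|^2 \ge \lambda_{\min}(A^\top A)\|x^{k-1}-x^*\|^2$ on the relevant subspace (one must check the iterates stay in $\mathrm{span}\{A_i^\top\}$, which follows by induction since each projection step adds multiples of the $A_i^\top$). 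Second, for the extra gain from the $x^k$-step I would lower-bound the coherence factor by $1/(1-\delta(A)^2)$ and then bound $\mathbb{E}\big[r_{i_k}'^2/\|A_{i_k}\|^2 \mid y^k\big]$ from below by $\big(\lambda_{\min}(A^\top A)/\alpha_F(A)\big)\|y^k - x^*\|^2$ via the same marginalization and Lemma~\ref{le:2.3}, which converts the $y^k$-error into an \emph{additional} multiplicative contraction by $\big(1 - \lambda_{\min}(A^\top A)/(\alpha_F(A)(1-\delta^2(A)))\big)$. Chaining the two contractions, taking full expectation, and iterating from $k$ down to $0$ yields \eqref{e24}; that the limit is the minimum-norm solution $x^\dagger$ (rather than an arbitrary element of $S$) follows from the subspace-invariance of the iterates together with the fact that $x^\dagger$ is the unique solution in $\mathrm{span}\{A_i^\top\}$, exactly as in Theorem~\ref{th1.1}.

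The main obstacle I anticipate is the bookkeeping needed to make the two descent factors \emph{compose correctly} in expectation: the index $i_k$ appears both in the coherence denominator (which I want to bound by a \emph{constant} $\delta(A)$) and in the residual $r'_{i_k}$ whose conditional expectation I need to control by $\lambda_{\min}/\alpha_F$. Because $j_k$ and $i_k$ are drawn jointly and not independently, I must be careful that conditioning on $y^k$ (equivalently on $j_k$ and $x^{k-1}$) leaves $i_k$ with the correct conditional distribution $p_{(j_k,i_k)}/\sum_{i\neq j_k}p_{(j_k,i)}$, and that the resulting weighted sum of residuals still admits the clean lower bound via $\lambda_{\min}(A^\top A)$; handling the row $j_k$ being excluded from the sum over $i_k$ is where the constant $\alpha_F(A)$ (rather than $\|A\|_F^2$) naturally enters, and verifying that this exclusion does not break the $\lambda_{\min}$ estimate—e.g. by noting $\sum_{i\ne j_k}\|A_i\|^2 \cdot (\text{residual}_i)^2 \ge \sum_i (\cdot) - \|A_{j_k}\|^2 r_{j_k}'^2$ and that the subtracted term vanishes since $A_{j_k}y^k = b_{j_k}$ by Lemma~\ref{le:3.3}(iii)—is the delicate point. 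Everything else is routine substitution and the same geometric-series iteration used for Theorem~\ref{th1.1}.
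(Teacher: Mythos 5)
Your overall route is the paper's: combine parts (i) and (v) of Lemma \ref{le:3.3}, bound the coherence factor below by $1/(1-\delta^2(A))$, take conditional expectation over the pair $(j_k,i_k)$, use Lemma \ref{le:3.3}(iii) (i.e. $A_{j_k}y^k=b_{j_k}$) so that the sum over $i_k\neq j_k$ equals the full sum and Lemma \ref{le:2.3} applies, chain the two resulting contraction factors, take full expectation and iterate. Your treatment of the $i_k$-step is exactly the paper's key step: conditioning on $j_k$ and $x^{k-1}$, the conditional law $\|A_{i}\|^2/(\|A\|_F^2-\|A_{j_k}\|^2)$ together with $\|A\|_F^2-\|A_{j_k}\|^2\le\alpha_F(A)$ and the vanishing excluded term gives the factor $1-\lambda_{\min}(A^\top A)/(\alpha_F(A)(1-\delta^2(A)))$, and the subspace/minimum-norm bookkeeping you mention is handled the same way in the paper.

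The one genuine soft spot is your $y^k$-step in the general (non-standardized) setting. Under the joint law $p_{(j,i)}=\|A_j\|^2\|A_i\|^2/\Upsilon$, the marginal of $j_k$ is $\|A_j\|^2(\|A\|_F^2-\|A_j\|^2)/\Upsilon$, so the expected decrease of $\|y^k-x^\dag\|^2$ is $\frac{1}{\Upsilon}\sum_j(\|A\|_F^2-\|A_j\|^2)\,|A_j(x^{k-1}-x^\dag)|^2$. To extract the claimed factor $1-\lambda_{\min}(A^\top A)/\|A\|_F^2$ from this you need the weights to dominate $\|A_j\|^2/\|A\|_F^2$, i.e. $(\|A\|_F^2-\|A_j\|^2)\|A\|_F^2\ge\Upsilon$ for every $j$, which can fail when the row norms are very unequal (e.g. squared row norms $100,1,1$); the bound you invoke, $\|A\|_F^2-\|A_j\|^2\le\alpha_F(A)$, goes in the wrong direction here — it is only in the $i_k$-step, where that quantity sits in a denominator, that it helps. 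As sketched, your argument for unequal row norms would only deliver the weaker factor $1-\frac{\min_j(\|A\|_F^2-\|A_j\|^2)}{\alpha_F(A)}\cdot\frac{\lambda_{\min}(A^\top A)}{\|A\|_F^2}$. The paper sidesteps this entirely by assuming at the outset, ``without loss of generality,'' that $A$ is standardized, so that $\|A\|_F^2=I$, $\alpha_F(A)=I-1$, the pair $(j_k,i_k)$ is uniform on $\Lambda$, and both factors come out exactly as in \eqref{r2y} and \eqref{r1}; if you insert that reduction (or restrict to equal row norms) your proposal becomes the paper's proof essentially verbatim, whereas if you insist on general row norms you must either prove the weight-domination inequality or accept a modified constant in \eqref{e24}.
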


\begin{proof} Without losing generality, we assume that the system of equations (\ref{Ax=b}) has been standardized, that is, $\|A_i\|^2=1$ holds  for each $i\in \mathcal{I}$. In this case, it is very easy to see that $\|A\|_F^2=I$, $\alpha_F(A)=I-1$, and $\Upsilon=I(I-1)$ hold. Consequently,  Step 1 of Algorithm 3 is to select $(j_k,i_k)\in \Lambda$ uniformly at random, i.e.,  with probability $\frac{1}{I(I-1)}$.

 Note that $A_ix^\dag=b_i$ holds for all $i\in \mathcal{I}$, we have  from (i) and (v) of Lemma \ref{le:3.3} that
    \begin{equation}\label{eq:3.11}
    \|y^k-x^\dag\|^2=\|x^{k-1}-x^\dag\|^2-\displaystyle|A_{j_k}(x^{k-1}-x^\dag)|^2
    \end{equation}
 and
 \begin{equation}\label{r2}
\|x^{k}-x^\dag\|^2
\leq\|y^k-x^\dag\|^2-\frac{\displaystyle|A_{i_{k}}(y^k-x^\dag)|^2}{(1-\delta^2(A))}
\end{equation}
hold for all $k\geq 1$, respectively. By (\ref{r2}) and the definition of  conditional expectation, we have

\begin{equation}\label{r1y}
 \aligned
&\mathbb{E}\left[\|x^k-x^\dag\|^2\,|\,x^{k-1}\right] \\
\leq& \frac{1}{I(I-1)}\sum_{(j_k,i_k)\in \Lambda}\left\{\|y^k-x^\dag\|^2-\frac{\displaystyle|A_{i_{k}}(y^k-x^\dag)|^2}{(1-\delta^2(A))}\right\}\\
 =&\frac{1}{I(I-1)}\sum_{j_k\in \mathcal{I}}\sum_{i_k\in\mathcal{I}-\{j_k\}}\left\{\|y^k-x^\dag\|^2-\frac{\displaystyle|A_{i_{k}}(y^k-x^\dag)|^2}{(1-\delta^2(A))}\right\}\\
 =&\frac{1}{I}\sum_{j_k\in \mathcal{I}}\|y^k-x^\dag\|^2-\frac{1}{I(I-1)(1-\delta^2(A))}\sum_{j_k\in \mathcal{I}}\sum_{i_k\in\mathcal{I}-\{j_k\}}\displaystyle|A_{i_{k}}(y^k-x^\dag)|^2.
 \endaligned
 \end{equation}
Noting that $y^{k}-x^\dag\in {\rm span}\{A_1^\top, A_2^\top, \cdots, A_I^\top\}$ and   $A_{j_k}(y^k-x^\dag)=0$ due to $y^k=P_{j_k}x^{k-1} $, it follows from Lemma \ref{le:2.3} that
\begin{equation}\label{r4}
\aligned
\sum_{i_k\in\mathcal{I}-\{j_k\}}\displaystyle|A_{i_{k}}(y^k-x^\dag)|^2=&\sum_{i_k\in\mathcal{I}}\displaystyle|A_{i_{k}}(y^k-x^\dag)|^2\\
=&\|A(y^k-x^\dag)\|^2
\geq \lambda_{\min}(A^\top A)\|y^k-x^\dag\|^2.
\endaligned
\end{equation}
Combining (\ref{r1y}) and (\ref{r4}), we have
\begin{equation}\label{r2y}
\mathbb{E}\left[\|x^k-x^\dag\|^2\,|\,x^{k-1}\right]\leq \left(1-\frac{\lambda_{\min}(A^\top A)}{(I-1)(1-\delta^2(A))}\right)\frac{1}{I}\sum_{j_k\in \mathcal{I}}\|y^k-x^\dag\|^2.
\end{equation}

Noting $x^{k-1}-x^\dag\in {\rm span}\{A_1^\top, A_2^\top, \cdots, A_I^\top\}$,   we have from (\ref{eq:3.11}) and
 Lemma \ref{le:2.3} that
 \begin{equation}\label{r1}
 \aligned
\frac{1}{I}\sum_{j_k\in \mathcal{I}}\|y^k-x^\dag\|^2 =& \|x^{k-1}-x^\dag\|^2-\frac{1}{I}\sum_{j_k\in \mathcal{I}}|A_{j_k}(x^{k-1}-x^\dag)|^2\\
 =&\|x^{k-1}-x^\dag\|^2-\frac{1}{I}\|A(x^{k-1}-x^\dag)\|^2\\
 \leq &\left (1-\frac{\lambda_{\min}(A^\top A)}{I}\right)\|x^{k-1}-x^\dag\|^2.
 \endaligned
 \end{equation}
Substituting (\ref{r1}) into (\ref{r2y}), we have
\begin{equation}\label{r3y}
\mathbb{E}\left[\|x^k-x^\dag\|^2\,|\,x^{k-1}\right]\leq \left(1-\frac{\lambda_{\min}(A^\top A)}{(I-1)(1-\delta^2(A))}\right)\left (1-\frac{\lambda_{\min}(A^\top A)}{I}\right)\|x^{k-1}-x^\dag\|^2.
\end{equation}
Hence, taking the full expectation at both sides of (\ref{r3y}), we obtain
\begin{equation}\label{r6}
\mathbb{E}\left[\|x^k-x^\dag\|^2\right]\leq \left(1-\frac{\lambda_{\min}(A^\top A)}{(I-1)(1-\delta^2(A))}\right)\left(1-\frac{\lambda_{\min}(A^\top A)}{I}\right)\mathbb{E}\left[\|x^{k-1}-x^\dag\|^2\right].
\end{equation}
Noting $\|A\|_F^2=I$ and $\alpha_F(A)=I-1$, (\ref{e24}) follows by applying  mathematical induction to (\ref{r6}).    This completes the proof.
\end{proof}

Although Algorithm 3 is equivalent to the two-subspace Kaczmarz method when $A$ is standardized, the proof of Theorem \ref{Th1}  is completely different from that of Theorem \ref{th-tss}, and the former is simpler. Furthermore, the convergence rate estimate (\ref{e24}) and (\ref{TSS2}) are  different  and  obviously, (\ref{e24}) is better than (\ref{TSS2}) in at least   two aspects:
 \begin{enumerate}
  \item [(i)]  (\ref{e24}) is only related to $\delta(A)$,  while (\ref{TSS2}) is related to both $\delta(A)$ and $\Delta(A)$;
  \item [(ii)] When $\delta(A)=0$, the acceleration effect of the two-subspace Kaczmarz method can not be reflected by (\ref{TSS2}) due to $D=0$,  but (\ref{e24})  clearly shows  the acceleration effect of Algorithm 3 since $\alpha_F(A)<\|A\|_F^2$.
 \end{enumerate}
Further, we can prove  that (\ref{e24}) is better than (\ref{TSS2}) under a mild condition.

\begin{theorem}
If $2\leq r\leq I\leq 1+\frac{9\left(1+\sqrt{1-\frac{8}{9r}}\right)^2r^2(r-1)}{4},$ then {\rm(\ref{e24})} is better than {\rm(\ref{TSS2})}, where $r$ is the rank of $A$.
\end{theorem}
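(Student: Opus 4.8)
The plan is to compare the two per-iteration contraction factors head-on and reduce the claim, through a short chain of elementary weakenings, to a one-variable polynomial inequality whose maximum is available in closed form. As in the proof of Theorem~\ref{Th1}, I would first assume without loss of generality that $A$ is standardized (the two-subspace method is run on a standardized matrix in any case), so that $\|A\|_F^2=I$ and $\alpha_F(A)=I-1$; write $\lambda=\lambda_{\min}(A^\top A)$ and $\delta=\delta(A)$, and record that $\delta<1$, since $\delta=1$ would force every pair of rows to be parallel and hence $r=1$, against $r\ge2$. Since both bounds are geometric in $k$ with these factors, the assertion that \eqref{e24} is better than \eqref{TSS2} says that the contraction factor of \eqref{e24} does not exceed that of \eqref{TSS2}:
\[
\left(1-\frac{\lambda}{(I-1)(1-\delta^2)}\right)\left(1-\frac{\lambda}{I}\right)\ \le\ \left(1-\frac{\lambda}{I}\right)^2-\frac{\lambda D}{I}.
\]
Subtracting, pulling $\left(1-\frac{\lambda}{I}\right)$ out of the first two terms, cancelling $\lambda>0$, using $\frac{1}{(I-1)(1-\delta^2)}-\frac1I=\frac{1+(I-1)\delta^2}{I(I-1)(1-\delta^2)}$, and multiplying by $I$, this is equivalent to
\[
D\ \le\ \left(1-\frac{\lambda}{I}\right)\frac{1+(I-1)\delta^2}{(I-1)(1-\delta^2)}.
\]

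Next I would feed in the two estimates at our disposal. Since $A^\top A$ has exactly $r$ nonzero eigenvalues and $\operatorname{tr}(A^\top A)=\|A\|_F^2=I$, we have $I\ge r\,\lambda$, so $1-\lambda/I\ge(r-1)/r$; and $D=\min\{\tfrac{\delta^2(1-\delta)}{1+\delta},\tfrac{\Delta^2(1-\Delta)}{1+\Delta}\}\le\tfrac{\delta^2(1-\delta)}{1+\delta}$. The first bound only shrinks the right-hand side above, and the second only enlarges the left-hand side, so it suffices to prove
\[
\frac{\delta^2(1-\delta)}{1+\delta}\ \le\ \frac{r-1}{r}\cdot\frac{1+(I-1)\delta^2}{(I-1)(1-\delta^2)}.
\]
Clearing denominators with $1-\delta^2=(1-\delta)(1+\delta)$ and rearranging, this becomes
\[
(I-1)\,\delta^2\bigl(1-2r\delta+r\delta^2\bigr)\ \le\ r-1,
\]
which must be checked for the actual value $\delta\in[0,1)$.

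The core of the proof is then to maximize $\psi(\delta):=\delta^2(1-2r\delta+r\delta^2)$ over $[0,1)$. From $\psi'(\delta)=2\delta(2r\delta^2-3r\delta+1)$ the nonzero critical points are $\frac{3\pm\sqrt{9-8/r}}{4}$; for $r\ge2$ the larger one is $>1$, while the smaller, $\delta_*=\frac{3-\sqrt{9-8/r}}{4}$, lies in $(0,1)$, and the sign of $\psi'$ shows $\psi$ increases on $[0,\delta_*]$ and decreases on $[\delta_*,1)$, so $\max_{[0,1)}\psi=\psi(\delta_*)$. Because $1-2r\delta_*+r\delta_*^2=1+r\delta_*(\delta_*-2)\le1$ (here $0<\delta_*<2$), we get $\psi(\delta_*)\le\delta_*^2$; and rationalizing,
\[
\delta_*=\frac{8/r}{4\bigl(3+\sqrt{9-8/r}\bigr)}=\frac{2}{3r\bigl(1+\sqrt{1-\tfrac{8}{9r}}\,\bigr)},\qquad\text{so}\qquad\frac{1}{\delta_*^2}=\frac{9\bigl(1+\sqrt{1-\tfrac{8}{9r}}\,\bigr)^2 r^2}{4}.
\]
Hence $(I-1)\psi(\delta)\le(I-1)\delta_*^2\le r-1$ whenever $I-1\le(r-1)/\delta_*^2$, which is precisely the stated bound $I\le1+\tfrac94\bigl(1+\sqrt{1-8/(9r)}\bigr)^2 r^2(r-1)$; this proves that \eqref{e24} is better than \eqref{TSS2}.

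I do not anticipate a real analytic obstacle — the scheme is entirely elementary. The two places that demand attention are: first, verifying that the crude substitutions $\lambda\le I/r$ and $D\le\tfrac{\delta^2(1-\delta)}{1+\delta}$ are used in the direction that makes the inequality harder (they are); and second, performing the maximization of the quartic $\psi$ and, above all, coaxing its critical point $\delta_*$ into exactly the closed form that reproduces the constant $\tfrac94 r^2\bigl(1+\sqrt{1-8/(9r)}\bigr)^2$ appearing in the statement. That final algebraic identification is the one step I expect to be slightly more than mechanical.
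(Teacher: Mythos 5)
Your proposal is correct and follows essentially the same route as the paper: the same reduction to $D\le\bigl(1-\tfrac{\lambda}{I}\bigr)\tfrac{1+(I-1)\delta^2}{(I-1)(1-\delta^2)}$, the same weakenings $r\lambda\le I$ and $D\le\tfrac{\delta^2(1-\delta)}{1+\delta}$, and the same one-variable optimization at the critical point $\hat{\delta}=\tfrac{2}{3r(1+\sqrt{1-8/(9r)})}$, finished by dropping the factor $(1-\hat{\delta})^2\le 1$. The only cosmetic difference is that you maximize $\psi(\delta)=\delta^2(1-2r\delta+r\delta^2)$ where the paper minimizes the equivalent function $f(\delta)$, and you supply the small verifications (the trace bound $r\lambda\le I$, the sign analysis of the critical points) that the paper leaves implicit.
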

\begin{proof}
For convenience, we assume that the system of equations (\ref{Ax=b}) has been standardized, i.e., $\|A_i\|^2=1$ for all  $i\in\mathcal{I}$. Also, $\lambda_{\rm min}(A^\top A)$ and $\delta(A)$ are simply expressed as $\lambda_{\rm min}$ and $\delta$, respectively.  Hence (\ref{e24}) and (\ref{TSS2}) become
 \begin{equation} \label{new-e24}
\mathbb{E}\left [\|x^{k}-x^\dag\|^2\right]\leq \left (1-\frac{\lambda_{\min}}{(I-1)(1-\delta^2)}\right)^k\left(1-\frac{\lambda_{\min}}{I}\right)^k\|x^{0}-x^\dag\|^2.
\end{equation}
and
\begin{equation}\label{new-TSS}
\mathbb{E}\left [\|x^{k}-x^\dag\|^2\right]\leq \left[\left(1-\frac{\lambda_{\rm min}}{I}\right)^2-\frac{\lambda_{\rm min}D}{I}\right]^k\|x^0-x^\dag\|^2,
\end{equation}
respectively. Note that
$$\left (1-\frac{\lambda_{\min}}{(I-1)(1-\delta^2)}\right)\left(1-\frac{\lambda_{\min}}{I}\right)=\left(1-\frac{\lambda_{\rm min}}{I}\right)^2-\frac{\lambda_{\rm min}}{I}\left(1-\frac{\lambda_{\rm min}}{I}\right)\frac{1+(I-1)\delta^2}{(I-1)(1-\delta^2)},$$ to prove the desired conclusion, from (\ref{new-e24}) and (\ref{new-TSS}), we need to show that
\begin{equation}\label{TSS-1}
\left(1-\frac{\lambda_{\rm min}}{I}\right)\frac{1+(I-1)\delta^2}{(I-1)(1-\delta^2)}\geq D
\end{equation}
holds for all $\delta\in[0,1].$ Since $r\lambda_{\rm min}\leq I$ and $D\leq \frac{\delta^2(1-\delta)}{1+\delta}$, to show (\ref{TSS-1}), it suffices to verify

$$\left(1-\frac{1}{r}\right)\frac{1+(I-1)\delta^2}{(I-1)(1-\delta^2)}\geq \frac{\delta^2(1-\delta)}{1+\delta},\,\,\forall \delta\in[0,1],$$
that is,
\begin{equation}\label{TSS-2}
\left(1-\frac{1}{r}\right)\left(\frac{1}{I-1}+\delta^2\right)\geq \delta^2(1-\delta)^2,\,\,\forall \delta\in[0,1].
\end{equation}

Define a real function:
\begin{equation}\label{TSS-3}
f(\delta)=\left(1-\frac{1}{r}\right)\left(\frac{1}{I-1}+\delta^2\right)-\delta^2(1-\delta)^2,\,\,\forall \delta\in[0,1].
\end{equation}
It is easy to verify that $f$ obtains its minimum value at $\hat{\delta}=\frac{2}{3r(1+\sqrt{1-\frac{8}{9r}})}$ and this implies  that proving (\ref{TSS-2}) is equivalent to proving $f(\hat{\delta})\geq 0$. To prove $f(\hat{\delta})\geq 0$, from (\ref{TSS-3}), it suffices to verify that
\begin{equation}\label{TSS-4}
\left(1-\frac{1}{r}\right)\left(\frac{1}{I-1}+\hat{\delta}^2\right)\geq \hat{\delta}^2
\end{equation}
holds. Indeed, it is very easy to verify that (\ref{TSS-4}) is equivalent to the condition $2\leq r\leq I\leq 1+\frac{9(1+\sqrt{1-\frac{8}{9r}})^2r^2(r-1)}{4}.$ This completes the proof.
\end{proof}

\begin{remark}
\rm
A great deal of matrices satisfy the condition $2\leq r\leq I\leq 1+\frac{9(1+\sqrt{1-\frac{8}{9r}})^2r^2(r-1)}{4}$, for example, the row full rank matrix, and column full rank matrix with $I\le 1+\frac{9(1+\sqrt{1-\frac{8}{9J}})^2J^2(J-1)}{4}$.

\end{remark}
\vskip -2mm

\section{Multi-step inertial randomized Kaczmarz algorithm}

The alternated inertial randomized Kaczmarz algorithm is computationally efficient  and simple to implement, but this method still could be accelerated. In this section,  we accelerate the alternated inertial randomized Kaczmarz algorithm based on the idea in multi-step inertial algorithms and thus name it as  the multi-step inertial randomized Kaczmarz algorithm.

\begin{algorithm}[!h]{\rm \caption{Multi-step inertial randomized Kaczmarz algorithm}}
\label{Al:4.1}
\begin{algorithmic}
\vskip 2mm
{
\STATE {\textbf{Input:}\, $A$ and $b$}
\STATE {\textbf{Step 0:} Take $x^0\in {\rm span}\{A_1^\top, A_2^\top, \cdots, A_I^\top\}$  arbitrarily and set $w^0=x^0$.\\
\qquad\qquad Select an index $i_0\in\{1,2,\cdots,I\}$ with probability $p_{i_0}=\frac{\|A_{i_0}\|^2}{\|A\|^2_{F}}$.\\
\qquad\qquad Calculate
\begin{equation}\label{eq:4.1}
x^1=P_{i_0}w^0.
\end{equation}
\qquad\qquad Set $k=1$.

\STATE {\textbf{Step 1:} Select an index $i_k\in \mathcal{I}-\{i_{k-1}\}$    with probability  $p_{i_k}=\frac{\|A_{i_k}\|^2}{\|A\|^2_{F}-\|A_{i_{k-1}}\|^2}$,}\\
\STATE {\textbf{Step 2:} Set
\begin{equation}\label{eq:4.2}
\gamma_k=\frac{(A_{i_k}x^k-b_{i_k})\langle A_{i_{k-1}}, A_{i_k}\rangle }{\|A_{i_{k-1}}\|^2\|A_{i_k}\|^2-\langle A_{i_{k-1}}, A_{i_k}\rangle^2}.
\end{equation}
\qquad\qquad Set
\begin{equation}\label{eq:4.3}
w^{k}=x^{k}+\gamma_kA_{i_{k-1}}^\top.
\end{equation} \\
\qquad\qquad Calculate
\begin{equation}\label{eq:4.4}
x^{k+1}=P_{i_k}w^k.
\end{equation}
}
\STATE {\textbf{Step 3:}
Set $k:=k+1$ and return to Step 1.
}
}
}
\end{algorithmic}
\end{algorithm}

\begin{remark}
\rm 

There are two illustrations to Algorithm 4.

(i) Algorithm 4 is essentially a  multi-step inertial algorithm. For each $k\geq 1$, noting $x^k=P_{i_{k-1}}w^{k-1}$, we get
\begin{equation}\label{add1}
x^k-w^{k-1}=-\frac{A_{i_{k-1}}w^{k-1}-b_{i_{k-1}}}{\|A_{i_{k-1}}\|^2}A_{i_{k-1}}^\top.
\end{equation}
It is easy to see from (\ref{eq:4.3}) and (\ref{add1}) that, in the case $x^k\neq w^{k-1}$,  (\ref{eq:4.3}) can be rewritten as the form:
\begin{equation}\label{4new1}
w^k=x^k+\gamma_k^\prime(x^k-w^{k-1}),
\end{equation}
where
\begin{equation}\label{e4.1}
\gamma_k^\prime=-\frac{(A_{i_k}x^k-b_{i_k})\langle A_{i_{k-1}}, A_{i_k}\rangle \|A_{i_{k-1}}\|^2}{(A_{i_{k-1}}w^{k-1}-b_{i_{k-1}})[\|A_{i_{k-1}}\|^2\|A_{i_k}\|^2-\langle A_{i_{k-1}}, A_{i_k}\rangle^2]}.
\end{equation}
Consequently, (\ref{eq:4.4}) is of the form:
\begin{equation}\label{4new2}
x^{k+1}=P_{i_k}(x^k+\gamma_k^\prime(x^k-w^{k-1})).
\end{equation}
It is easy to see from (\ref{4new1}) and (\ref{4new2}) that constructing $x^{k+1}$ involves $x^0, x^1, \cdots, x^k$, hence Algorithm 4 is  a  multi-step inertial algorithm (see, e.g. \cite{Dong}).

(ii) Assume that $A$ is standardized. Set $\mu_k=\langle A_{i_{k-1}},A_{i_k}\rangle$ and $r^k=A_{i_k}x^k-b_{i_k}$. Then  (\ref{eq:4.2})-(\ref{eq:4.4}) of Algorithm 4 can be simply expressed as: for the current $x^k\,(k\geq 0)$, update $x^{k+1}$ by the formula
\begin{equation}\label{Al4.1-express}
x^{k+1}=x^k+\frac{r^k\mu_k}{1-\mu_k^2}A^\top_{i_{k-1}}-\frac{r^k}{1-\mu_k^2}A^\top_{i_k}.
\end{equation}
(\ref{Al4.1-express}) is very suitable for the implementation of Algorithm 4.

\end{remark}

\begin{lemma} \label{le:4.4}
The inertial parameter sequence $\{\gamma_k\}_{k=0}^\infty$ in \eqref{eq:4.2} is optimal in the sense that, for each $k\geq 1$,  $\gamma_k$ makes $\|x^{k+1}-x^*\|^2$ obtain its minimum value for arbitrary $x^*\in S$.
\end{lemma}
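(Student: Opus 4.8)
The plan is to mirror the argument used for Lemma \ref{le:3.3}(iv), since Algorithm 4 is built on the same orthogonalization idea as Algorithm 3. Fix $k\geq 1$ and $x^*\in S$, so that $A_{i_{k-1}}x^*=b_{i_{k-1}}$ and $A_{i_k}x^*=b_{i_k}$. I would first replace $\gamma_k$ by a free parameter $\gamma\in\mathbb{R}$, writing $w^k(\gamma)=x^k+\gamma A_{i_{k-1}}^\top$ and then $x^{k+1}(\gamma)=P_{i_k}w^k(\gamma)$, and express $\|x^{k+1}(\gamma)-x^*\|^2$ as an explicit quadratic function of $\gamma$. This is exactly the computation that produced \eqref{eq:3.5}: using the projection formula \eqref{eq:2.2} onto $H_{i_k}$, one gets
\begin{equation*}
\|x^{k+1}(\gamma)-x^*\|^2=\|w^k(\gamma)-x^*\|^2-\frac{|A_{i_k}w^k(\gamma)-b_{i_k}|^2}{\|A_{i_k}\|^2}.
\end{equation*}

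The key step is to handle the cross term $\langle x^k-x^*,A_{i_{k-1}}^\top\rangle$ that appears when expanding $\|w^k(\gamma)-x^*\|^2=\|x^k-x^*\|^2+2\gamma\langle x^k-x^*,A_{i_{k-1}}^\top\rangle+\gamma^2\|A_{i_{k-1}}\|^2$. Here I would invoke the fact that $x^k=P_{i_{k-1}}w^{k-1}$, so $x^k\in H_{i_{k-1}}$ and $x^*\in H_{i_{k-1}}$; since $H_{i_{k-1}}$ is a hyperplane with normal direction $A_{i_{k-1}}^\top$, the difference $x^k-x^*$ is orthogonal to $A_{i_{k-1}}^\top$, i.e. $A_{i_{k-1}}(x^k-x^*)=A_{i_{k-1}}x^k-b_{i_{k-1}}=0$. (This plays the role that Lemma \ref{le:3.3}(iii) played in the Algorithm 3 argument.) Consequently the cross term vanishes and $\|w^k(\gamma)-x^*\|^2=\|x^k-x^*\|^2+\gamma^2\|A_{i_{k-1}}\|^2$. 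Expanding the second term, $|A_{i_k}w^k(\gamma)-b_{i_k}|^2=|A_{i_k}x^k-b_{i_k}|^2+2\gamma\langle A_{i_{k-1}},A_{i_k}\rangle(A_{i_k}x^k-b_{i_k})+\gamma^2\langle A_{i_{k-1}},A_{i_k}\rangle^2$, so that
\begin{equation*}
\|x^{k+1}(\gamma)-x^*\|^2=\|x^k-x^*\|^2-\frac{|A_{i_k}x^k-b_{i_k}|^2}{\|A_{i_k}\|^2}+\gamma^2\!\left(\|A_{i_{k-1}}\|^2-\frac{\langle A_{i_{k-1}},A_{i_k}\rangle^2}{\|A_{i_k}\|^2}\right)-2\gamma\,\frac{\langle A_{i_{k-1}},A_{i_k}\rangle(A_{i_k}x^k-b_{i_k})}{\|A_{i_k}\|^2}.
\end{equation*}

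To finish, I would note that the first two terms do not depend on $\gamma$, so minimizing $\|x^{k+1}(\gamma)-x^*\|^2$ over $\gamma$ amounts to minimizing the quadratic $\gamma\mapsto \gamma^2(\|A_{i_{k-1}}\|^2-\langle A_{i_{k-1}},A_{i_k}\rangle^2/\|A_{i_k}\|^2)-2\gamma\langle A_{i_{k-1}},A_{i_k}\rangle(A_{i_k}x^k-b_{i_k})/\|A_{i_k}\|^2$. Its leading coefficient is $(\|A_{i_{k-1}}\|^2\|A_{i_k}\|^2-\langle A_{i_{k-1}},A_{i_k}\rangle^2)/\|A_{i_k}\|^2$, which is strictly positive by assumption (H2) and $i_k\neq i_{k-1}$ (Cauchy--Schwarz is strict since the rows are distinct and nonparallel), so the quadratic is strictly convex and its unique minimizer is obtained by setting the derivative to zero, giving precisely $\gamma=\gamma_k$ as in \eqref{eq:4.2}. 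I do not anticipate a genuine obstacle here; the only point requiring care is the justification that the cross term vanishes, which hinges on observing that $x^k$ lies on the hyperplane $H_{i_{k-1}}$ by construction — this is what makes the $\gamma A_{i_{k-1}}^\top$ update a pure "inertial" correction that is $x^*$-orthogonal, and it is the exact analogue of the mechanism in Lemma \ref{le:3.3}.
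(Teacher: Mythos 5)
Your proposal is correct and follows essentially the same route as the paper's proof: the same projection identity giving \eqref{eq:4.5}, the same vanishing cross term $\langle x^k-x^*,A_{i_{k-1}}^\top\rangle=0$ obtained from $x^k=P_{i_{k-1}}w^{k-1}$ (the paper's \eqref{new-4-2}), the same quadratic expansion \eqref{eq:4.8}, and minimization of that quadratic in $\gamma$. The only difference is cosmetic — you carry out the minimization explicitly (including the strict positivity of the leading coefficient under (H2)), whereas the paper defers this last step to the argument of Lemma \ref{le:3.3}(iv).
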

\begin{proof}
For each $k\geq 0$, it concludes from (\ref{eq:4.1}), (\ref{eq:4.4}), (\ref{eq:2.2}) and the fact  $A_{i_k}x^*=b_{i_k}$   that

\begin{equation}\label{eq:4.5}
\aligned
&\|x^{k+1}-x^*\|^2\\
=&\|P_{i_k}w^{k}-x^*\|^2\\
=&\left\|w^{k}-x^*-\frac{A_{i_k}w^{k}-b_{i_k}}{\|A_{i_k}\|^2}A_{i_k}^\top\right\|^2\\
=&\|w^{k}-x^*\|^2-2\left\langle w^{k}-x^*,\frac{A_{i_k}w^{k}-b_{i_k}}{\|A_{i_k}\|^2}A_{i_k}^\top\right\rangle +\frac{|A_{i_k}w^{k} -b_{i_k}|^2}{\|A_{i_k}\|^2}\\
=&\|w^{k}-x^*\|^2-\frac{|A_{i_k}w^{k}-b_{i_k}|^2}{\|A_{i_k}\|^2}.
\endaligned
\end{equation}
For any $k\geq 1$,  noting $A_{i_{k-1}}x^*=b_{i_{k-1}}$, we have from (\ref{add1}) that
\begin{equation}\label{new-4-2}
\langle x^k-x^*, A_{i_{k-1}}^\top\rangle =0.
\end{equation}
Consequently, we get
\begin{equation}\label{eq:4.6}
\|w^k-x^*\|^2=\|x^k-x^*+\gamma_kA_{i_{k-1}}^\top\|^2=\|x^k-x^*\|^2+\gamma_k^2\|A_{i_{k-1}}\|^2,\,\,\forall k\geq 1.
\end{equation}
By direct calculation, we get
\begin{equation}\label{eq:4.7}
\aligned
&|A_{i_k}w^k-b_{i_k}|^2\\
=&|A_{i_k}(x^{k}+\gamma_kA_{i_{k-1}}^\top)-b_{i_k}|^2\\
=&|A_{i_k}x^{k}-b_{i_k}|^2+\gamma_k^2\langle A_{i_{k-1}}, A_{i_k}\rangle^2+2\gamma_k\langle A_{i_{k-1}}, A_{i_k}\rangle(A_{i_k}x^{k}-b_{i_k}),\,\,\,\forall k\geq 1.
\endaligned
\end{equation}
Combining (\ref{eq:4.5}), (\ref{eq:4.6}) and (\ref{eq:4.7}), we obtain
\begin{equation}\label{eq:4.8}
\aligned
&\|x^{k+1}-x^*\|^2\\
=&\|x^{k}-x^*\|^2-\frac{|A_{i_k}x^{k}-b_{i_k}|^2}{\|A_{i_k}\|^2}+\gamma_k^2\left(\|A_{i_{k-1}}\|^2-\frac{\langle A_{i_{k-1}}, A_{i_k}\rangle^2}{\|A_{i_k}\|^2}\right)\\
&-2\gamma_k \frac{\langle A_{i_{k-1}}, A_{i_k}\rangle(A_{i_k}x^{k}-b_{i_k})}{\|A_{i_k}\|^2},\,\,\,\forall k\geq 1.
\endaligned
\end{equation}
By an argument similar to (iv) of Lemma \ref{le:3.3}, it is easy to see that $\gamma_k$ in (\ref{eq:4.2}) such that  $\|x^{k+1}-x^*\|^2$
reaches  its minimum value.
\end{proof}
\begin{lemma} \label{le:4.5}
Let $\{x^k\}_{k=0}^\infty$ be the  sequence generated by Algorithm {\rm4}. Then for any $k\geq 1$ and $x^* \in S$, the following statements hold:
\begin{enumerate}
\item [{\rm(i)}]

\begin{equation}\label{eq:4.10}
\|x^{k+1}-x^*\|^2=\|x^{k}-x^*\|^2-\frac{\displaystyle|A_{i_k}x^{k}-b_{i_k}|^2}{\displaystyle\|A_{i_k}\|^2}
\frac{\displaystyle1}{\displaystyle1-\left\langle \frac{\displaystyle A_{i_{k-1}}}{\displaystyle\|A_{i_{k-1}}\|}, \frac{\displaystyle A_{i_k}}{\displaystyle\|A_{i_k}\|}\right\rangle^2}.
\end{equation}

\item [{\rm(ii)}]

\begin{equation}\label{eq:4.12}
\|x^{k+1}-x^k\|^2=\frac{\displaystyle|A_{i_k}x^{k}-b_{i_k}|^2}{\displaystyle\|A_{i_k}\|^2}
\frac{\displaystyle1}{\displaystyle1-\left\langle \frac{\displaystyle A_{i_{k-1}}}{\displaystyle\|A_{i_{k-1}}\|}, \frac{\displaystyle A_{i_k}}{\displaystyle\|A_{i_k}\|}\right\rangle^2}.
\end{equation}

\end{enumerate}
\end{lemma}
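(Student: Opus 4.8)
The plan is to derive both identities from the one-step expansion already obtained in the proof of Lemma~\ref{le:4.4}, namely~\eqref{eq:4.8}, by inserting the optimal inertial parameter $\gamma_k$ from~\eqref{eq:4.2}; this parallels the way Lemma~\ref{le:3.3}(v)--(vi) were deduced from~\eqref{eq:3.5}. Throughout I would abbreviate $c=A_{i_k}x^k-b_{i_k}$, $m=\langle A_{i_{k-1}},A_{i_k}\rangle$ and $\Delta_k=\|A_{i_{k-1}}\|^2\|A_{i_k}\|^2-m^2$, so that $\gamma_k=cm/\Delta_k$ by~\eqref{eq:4.2} and $\Delta_k\neq 0$ under (H2).

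For part (i): substitute $\gamma_k=cm/\Delta_k$ into the last two ($\gamma_k$-dependent) terms of~\eqref{eq:4.8}. The quadratic term becomes $\gamma_k^2\Delta_k/\|A_{i_k}\|^2=c^2m^2/(\Delta_k\|A_{i_k}\|^2)$ and the cross term becomes $-2c^2m^2/(\Delta_k\|A_{i_k}\|^2)$, so their sum is $-c^2m^2/(\Delta_k\|A_{i_k}\|^2)$ --- this is exactly the algebraic reflection of Lemma~\ref{le:4.4}, that $\gamma_k$ minimizes. Adding the remaining term $-c^2/\|A_{i_k}\|^2$ and factoring yields $-\frac{c^2}{\|A_{i_k}\|^2}\cdot\frac{\Delta_k+m^2}{\Delta_k}=-\frac{c^2}{\|A_{i_k}\|^2}\cdot\frac{\|A_{i_{k-1}}\|^2\|A_{i_k}\|^2}{\Delta_k}$. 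Since $\bigl\langle A_{i_{k-1}}/\|A_{i_{k-1}}\|,\,A_{i_k}/\|A_{i_k}\|\bigr\rangle^2=m^2/(\|A_{i_{k-1}}\|^2\|A_{i_k}\|^2)$, one has $1-\bigl\langle\cdot,\cdot\bigr\rangle^2=\Delta_k/(\|A_{i_{k-1}}\|^2\|A_{i_k}\|^2)$, and the last factor above is precisely $1/(1-\langle\cdot,\cdot\rangle^2)$, which is~\eqref{eq:4.10}.

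For part (ii) I would prefer a short geometric argument over a second round of algebra. First, $x^{k+1}=P_{i_k}w^k$ gives $A_{i_k}x^{k+1}=b_{i_k}$. Next, $x^k=P_{i_{k-1}}w^{k-1}$ gives $A_{i_{k-1}}x^k=b_{i_{k-1}}$, and computing $A_{i_{k-1}}x^{k+1}=A_{i_{k-1}}w^k-\frac{A_{i_k}w^k-b_{i_k}}{\|A_{i_k}\|^2}\,m=b_{i_{k-1}}+\gamma_k\|A_{i_{k-1}}\|^2-\frac{(c+\gamma_k m)m}{\|A_{i_k}\|^2}$, the extra terms cancel exactly by the choice $\gamma_k\Delta_k=cm$ in~\eqref{eq:4.2}, so $A_{i_{k-1}}x^{k+1}=b_{i_{k-1}}$ as well. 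Hence $x^{k+1}-x^*$ is orthogonal to ${\rm span}\{A_{i_{k-1}}^\top,A_{i_k}^\top\}$, whereas $x^{k+1}-x^k=\gamma_kA_{i_{k-1}}^\top-\frac{A_{i_k}w^k-b_{i_k}}{\|A_{i_k}\|^2}A_{i_k}^\top$ lies in that span; therefore $\langle x^{k+1}-x^*,x^{k+1}-x^k\rangle=0$, i.e.\ $\|x^{k+1}-x^k\|^2=\|x^k-x^*\|^2-\|x^{k+1}-x^*\|^2$, which together with part (i) gives~\eqref{eq:4.12}. (Alternatively, mimicking Lemma~\ref{le:3.3}(vi), substitute $\gamma_k$ into~\eqref{eq:4.3}--\eqref{eq:4.4} to get the explicit update $x^{k+1}=x^k+\frac{c\,(mA_{i_{k-1}}^\top-\|A_{i_{k-1}}\|^2A_{i_k}^\top)}{\Delta_k}$ and expand the square, using $\|mA_{i_{k-1}}^\top-\|A_{i_{k-1}}\|^2A_{i_k}^\top\|^2=\|A_{i_{k-1}}\|^2\Delta_k$.)

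I expect no genuine obstacle here: the whole proof is~\eqref{eq:4.8} together with careful bookkeeping of the row norms $\|A_{i_{k-1}}\|$, $\|A_{i_k}\|$ in the non-standardized setting. The two points that deserve an explicit sentence are (a) the cancellation in part (i) that replaces the optimal-$\gamma_k$ correction by the clean factor $1/(1-\langle\cdot,\cdot\rangle^2)$, and (b) in the geometric route for part (ii), the observation that $x^{k+1}$ lies simultaneously on $H_{i_{k-1}}$ and $H_{i_k}$, which is exactly what forces $x^{k+1}-x^k$ and $x^{k+1}-x^*$ to be orthogonal.
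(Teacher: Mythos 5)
Your proposal is correct. Part (i) is essentially the paper's own argument: substitute the optimal $\gamma_k$ from \eqref{eq:4.2} into the expansion \eqref{eq:4.8} and simplify, and your bookkeeping (the $\gamma_k$-dependent terms collapsing to $-c^2m^2/(\Delta_k\|A_{i_k}\|^2)$ and the final factor $\|A_{i_{k-1}}\|^2\|A_{i_k}\|^2/\Delta_k=1/(1-\langle\cdot,\cdot\rangle^2)$) checks out. For part (ii) your primary route differs from the paper: you first verify that $A_{i_{k-1}}x^{k+1}=b_{i_{k-1}}$ (the cancellation indeed reduces to $\gamma_k\Delta_k=cm$), so that $x^{k+1}$ lies on both hyperplanes, hence $x^{k+1}-x^*$ is orthogonal to ${\rm span}\{A_{i_{k-1}}^\top,A_{i_k}^\top\}$ while $x^{k+1}-x^k$ lies in that span, and \eqref{eq:4.12} follows from part (i) by the Pythagorean identity $\|x^{k+1}-x^k\|^2=\|x^k-x^*\|^2-\|x^{k+1}-x^*\|^2$. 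The paper instead writes $x^{k+1}-x^k$ explicitly as $\gamma_k\bigl(A_{i_{k-1}}^\top-\tfrac{\langle A_{i_{k-1}},A_{i_k}\rangle}{\|A_{i_k}\|^2}A_{i_k}^\top\bigr)-\tfrac{A_{i_k}x^k-b_{i_k}}{\|A_{i_k}\|^2}A_{i_k}^\top$, uses the orthogonality of the two components, and substitutes $\gamma_k$ — which is exactly your parenthetical alternative (your explicit update and the norm identity $\|mA_{i_{k-1}}^\top-\|A_{i_{k-1}}\|^2A_{i_k}^\top\|^2=\|A_{i_{k-1}}\|^2\Delta_k$ are both correct). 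Your geometric route buys a cleaner conceptual picture — it exhibits $x^{k+1}$ as lying in the intersection $H_{i_{k-1}}\cap H_{i_k}$, which explains the ``two-subspace'' behaviour and makes (ii) an immediate corollary of (i) — at the price of one extra verification ($A_{i_{k-1}}x^{k+1}=b_{i_{k-1}}$); the paper's computation is self-contained and independent of (i). Either version is acceptable.
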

\begin{proof}
(i)  Obviously,   (\ref{eq:4.10}) follows from substituting (\ref{eq:4.2}) into (\ref{eq:4.8}).

(ii) From (\ref{eq:4.3}) and (\ref{eq:4.4}), we have
\begin{equation*}
x^{k+1}-x^k=\gamma_k\left\{A_{i_{k-1}}^\top-\frac{\langle A_{i_{k-1}}, A_{i_k}\rangle}{\|A_{i_k}\|^2}A_{i_k}^\top\right\}-\frac{A_{i_k}x^{k}-b_{i_k}}{\|A_{i_k}\|^2}A_{i_k}^\top.
\end{equation*}
Noting that
$$\left\langle A_{i_{k-1}}^\top-\frac{\langle A_{i_{k-1}}, A_{i_k}\rangle}{\|A_{i_k}\|^2}A_{i_k}^\top, A_{i_k}^\top\right\rangle=0,$$
we have
\begin{equation}\label{eq:4.13}
\|x^{k+1}-x^k\|^2=\frac{|A_{i_k}x^{k}-b_{i_k}|^2}{\|A_{i_k}\|^2}+\gamma_k^2\left(\|A_{i_{k-1}}\|^2-\frac{\langle A_{i_{k-1}}, A_{i_k}\rangle^2}{\|A_{i_k}\|^2}\right)
\end{equation}
Hence, (\ref{eq:4.12}) yields  by substituting (\ref{eq:4.2}) into (\ref{eq:4.13}).
\end{proof}

\begin{theorem}
Choose $x^0\in {\rm span}\{A_1^\top, A_2^\top, \cdots, A_I^\top\}$ arbitrarily and let $\left\{x^k\right\}_{k=0}^{\infty}$ be a  sequence generated by  Algorithm {\rm4}. Then $\left\{x^k\right\}_{k=0}^{\infty}$ converges linearly in expectation  to the minimum norm solution $x^\dag$. Precisely, for all $k\geq 1$, there holds the following statement:

 \begin{equation} \label{e244}
\mathbb{E}\left[\|x^{k+1}-x^\dag\|^2\right]\leq \left(1-\frac{\lambda_{\min}(A^\top A)}{\alpha_F (A)(1-\delta^2(A))}\right)^k
\left(1-\frac{\lambda_{\min}(A^\top A)}{\|A\|_F^2}\right)\|x^{0}-x^\dag\|^2.
\end{equation}

    \end{theorem}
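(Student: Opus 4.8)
The plan is to follow the scheme of the proof of Theorem~\ref{Th1}, using Lemma~\ref{le:4.5}(i) in place of Lemma~\ref{le:3.3}, but with one adjustment that reflects the two-stage structure of Algorithm~\ref{Al:4.1}: the first iterate $x^1=P_{i_0}x^0$ is an ordinary randomized Kaczmarz step, and only from $x^1$ onward do the inertial steps act. This is precisely why the right-hand side of \eqref{e244} carries the $k$-th power of the ``inertial'' factor but only a single power of $1-\lambda_{\min}(A^\top A)/\|A\|_F^2$.

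First I would standardize, assuming $\|A_i\|^2=1$ for all $i\in\mathcal{I}$, so that $\|A\|_F^2=I$, $\alpha_F(A)=I-1$, the index $i_0$ is drawn uniformly on $\mathcal{I}$, and for $k\ge1$ the index $i_k$ is drawn uniformly on $\mathcal{I}-\{i_{k-1}\}$. I would also record that $x^k-x^\dag\in{\rm span}\{A_1^\top,\dots,A_I^\top\}$ for every $k$: both $x^0$ and $x^\dag$ lie in this row subspace, and each of the updates \eqref{eq:4.1}, \eqref{eq:4.3}, \eqref{eq:4.4} adds only multiples of rows of $A$, so Lemma~\ref{le:2.3} applies to every $x^k-x^\dag$. (By (H2) no two rows of the standardized $A$ are parallel, hence $\delta(A)<1$ and all quantities appearing in Lemma~\ref{le:4.5} are well defined.)

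Next, fix $k\ge1$. Starting from Lemma~\ref{le:4.5}(i) with $x^*=x^\dag$ and using $|\langle A_{i_{k-1}},A_{i_k}\rangle|\ge\delta(A)$, the factor $\big(1-\langle A_{i_{k-1}},A_{i_k}\rangle^2\big)^{-1}$ is bounded below by $\big(1-\delta^2(A)\big)^{-1}$, giving
\[
\|x^{k+1}-x^\dag\|^2\le\|x^k-x^\dag\|^2-\frac{|A_{i_k}(x^k-x^\dag)|^2}{1-\delta^2(A)}.
\]
Taking the conditional expectation over $i_k$, uniform on $\mathcal{I}-\{i_{k-1}\}$, produces the average of $|A_{i_k}(x^k-x^\dag)|^2$ over the $I-1$ indices distinct from $i_{k-1}$. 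The key step, the analogue of \eqref{r4}, is that $x^k=P_{i_{k-1}}w^{k-1}\in H_{i_{k-1}}$, so $A_{i_{k-1}}(x^k-x^\dag)=A_{i_{k-1}}x^k-b_{i_{k-1}}=0$; hence the sum over $\mathcal{I}-\{i_{k-1}\}$ equals the sum over all of $\mathcal{I}$, which is $\|A(x^k-x^\dag)\|^2\ge\lambda_{\min}(A^\top A)\|x^k-x^\dag\|^2$ by Lemma~\ref{le:2.3}. This yields, for every $k\ge1$,
\[
\mathbb{E}\left[\|x^{k+1}-x^\dag\|^2\,|\,x^k,i_{k-1}\right]\le\left(1-\frac{\lambda_{\min}(A^\top A)}{(I-1)(1-\delta^2(A))}\right)\|x^k-x^\dag\|^2,
\]
and taking the full expectation keeps the same factor multiplying $\mathbb{E}[\|x^k-x^\dag\|^2]$.

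Finally I would treat the base iterate: $x^1=P_{i_0}x^0$ with $i_0$ uniform on $\mathcal{I}$, so the computation behind \eqref{r1} gives $\mathbb{E}[\|x^1-x^\dag\|^2]\le\big(1-\lambda_{\min}(A^\top A)/I\big)\|x^0-x^\dag\|^2$. Iterating the one-step contraction for $k,k-1,\dots,1$ and then inserting this base estimate produces the bound with factor $\big(1-\frac{\lambda_{\min}(A^\top A)}{(I-1)(1-\delta^2(A))}\big)^{k}\big(1-\frac{\lambda_{\min}(A^\top A)}{I}\big)$; rewriting $I=\|A\|_F^2$ and $I-1=\alpha_F(A)$ recovers \eqref{e244}, and since the right-hand side tends to $0$ while each $x^k-x^\dag$ stays in the row space, $\{x^k\}$ converges linearly in expectation to the minimum norm solution $x^\dag$. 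I do not expect a genuine obstacle; the only things requiring care are the bookkeeping of the conditioning on the dependent pair $(i_{k-1},i_k)$ and remembering that the first step must be estimated by the plain randomized Kaczmarz bound rather than the inertial one.
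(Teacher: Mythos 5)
Your proposal is correct in substance and follows essentially the same route as the paper's proof: the one-step identity of Lemma \ref{le:4.5}(i) bounded via $\delta(A)$, the key orthogonality $A_{i_{k-1}}x^k=b_{i_{k-1}}$ (from $x^k=P_{i_{k-1}}w^{k-1}$) to complete the sum over $\mathcal{I}-\{i_{k-1}\}$ to all of $\mathcal{I}$, Lemma \ref{le:2.3} applied to $x^k-x^\dag$ in the row space, the plain randomized Kaczmarz estimate for the initial step $x^1=P_{i_0}x^0$, and induction — exactly the structure of \eqref{e234}--\eqref{new-4-4}. The one point to flag is your opening ``standardize without loss of generality'': unlike the proof of Theorem \ref{Th1}, the paper's proof of this theorem does \emph{not} standardize, because rescaling the rows changes Algorithm 4's selection probabilities $p_{i_k}=\|A_{i_k}\|^2/(\|A\|_F^2-\|A_{i_{k-1}}\|^2)$ as well as the constants $\lambda_{\min}(A^\top A)$, $\|A\|_F^2$ and $\alpha_F(A)$ appearing in \eqref{e244}, so as written your argument only establishes the bound when all rows already have unit norm. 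The repair is immediate and is what the paper does: keep the weights in the conditional expectation, note that the factor $\|A_{i_k}\|^2$ cancels against the residual term $|A_{i_k}x^k-b_{i_k}|^2/\|A_{i_k}\|^2$, and bound the denominator $\|A\|_F^2-\|A_{i_{k-1}}\|^2$ by $\alpha_F(A)$; everything else in your argument then goes through verbatim.
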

\begin{proof}
Note that $w^0=x^0$,  we have  from (\ref{eq:4.1}), (\ref{eq:4.5}) and (\ref{eq:2.2}) that
\begin{equation*}\label{eq:4.9}
\|x^{1}-x^\dag\|^2=\|x^{0}-x^\dag\|^2-\frac{|A_{i_{0}}x^{0}-b_{i_{0}}|^2}{\|A_{i_{0}}\|^2}.
\end{equation*}
Due to $x^0-x^\dag\in {\rm span}\{A_1^\top, A_2^\top, \cdots, A_I^\top\}$, this together with Lemma \ref{le:2.3} leads to

 \begin{equation} \label{e234}
 \aligned
 \mathbb{E}\left[\|x^{1}-x^\dag\|^2\right]&=\|x^{0}-x^\dag\|^2-\sum_{i_0\in \mathcal{I}}\frac{\|A_{i_{0}}\|^2}{\|A\|_F^2}\frac{|A_{i_{0}}x^{0}-b_{i_{0}}|^2}{\|A_{i_{0}}\|^2}\\
 &=\|x^{0}-x^\dag\|^2-\sum_{i_0\in \mathcal{I}}\frac{\|A_{i_{0}}\|^2}{\|A\|_F^2}\frac{|A_{i_{0}}(x^{0}-x^\dag)|^2}{\|A_{i_{0}}\|^2}\\
 &\leq \|x^{0}-x^\dag\|^2-\frac{1}{\|A\|_F^2}\|A(x^0-x^\dag)\|^2\\
 &\leq \left(1-\frac{\lambda_{\min}(A^\top A)}{\|A\|_F^2}\right)\|x^{0}-x^\dag\|^2.
\endaligned
\end{equation}
 For any $k\geq 1$, from (\ref{eq:4.10}), we have
\begin{equation} \label{new-e234}
\|x^{k+1}-x^\dag\|^2\leq\|x^{k}-x^\dag\|^2-\frac{\displaystyle|A_{i_k}x^{k}-b_{i_k}|^2}{\displaystyle\|A_{i_k}\|^2}
\frac{\displaystyle1}{\displaystyle1-\delta^2(A)}.
\end{equation}
From Step 1 of  Algorithm 4 and (\ref{new-e234}), we get
\begin{equation} \label{new-4-1}
 \aligned
&\mathbb{E}\left[\|x^{k+1}-x^\dag\|^2\,|\,x^k\right]\\
\leq &\|x^{k}-x^\dag\|^2-\frac{\displaystyle1}{\displaystyle1-\delta^2(A)}
\sum_{i_k\in\mathcal{I}-\{i_{k-1}\}}\frac{\|A_{i_k}\|^2}{\|A\|^2_F-\|A_{i_{k-1}}\|^2}\frac{\displaystyle|A_{i_k}x^{k}-b_{i_k}|^2}{\displaystyle\|A_{i_k}\|^2}\\
\leq &\|x^{k}-x^\dag\|^2-\frac{\displaystyle1}{\displaystyle\alpha_F(A)(1-\delta^2(A))}\sum_{i_k\in\mathcal{I}-\{i_{k-1}\}}|A_{i_k}x^{k}-b_{i_k}|^2.
\endaligned
\end{equation}
On the other hand,  from (\ref{new-4-2}), it yields $A_{i_{k-1}}x^k-b_{i_{k-1}}=0$. This together with Lemma \ref{le:2.3} leads to
\begin{equation} \label{new-4-3}
\aligned
&\sum_{i_k\in\mathcal{I}-\{i_{k-1}\}}|A_{i_k}x^{k}-b_{i_k}|^2\\
=&\sum_{i_k\in\mathcal{I}}|A_{i_k}x^{k}-b_{i_k}|^2=\sum_{i_k\in\mathcal{I}}|A_{i_k}(x^{k}-x^\dag)|^2
\geq\lambda_{\min}(A^\top A)\|x^k-x^\dag\|^2.
\endaligned
 \end{equation}
Combining (\ref{new-4-1}) and (\ref{new-4-3}), it yields
\begin{equation} \label{new-4-4}
\mathbb{E}\left[\|x^{k+1}-x^\dag\|^2\,|\,x^k\right]\leq \left(1-\frac{\lambda_{\min}(A^\top A)}{\alpha_F(A) (\displaystyle1-\delta^2(A))}\right)\|x^k-x^\dag\|^2.
\end{equation}
Hence, taking the full expectation at both sides of (\ref{new-4-4}), we obtain
\begin{equation} \label{new-4-4}
\mathbb{E}\left[\|x^{k+1}-x^\dag\|^2\right]\leq \left(1-\frac{\lambda_{\min}(A^\top A)}{\alpha_F(A) (\displaystyle1-\delta^2(A))}\right)\mathbb{E}\left[\|x^k-x^\dag\|^2\right].
\end{equation}
Using mathematical induction, (\ref{e244}) follows from (\ref{e234}) and (\ref{new-4-4}).
 \end{proof}

\begin{remark}
\rm 
Since each iteration of Algorithm 3 utilizes two rows of the matrix $A$, one needs to equate a single iteration of Algorithm 3 with two iterations of Algorithm 4 for fair comparison of the convergence rate. It is obvious that the estimate \eqref{e244} is better than the estimate \eqref{e24}  in this sense.

\end{remark}

\section{Numerical Results}
In this section, we perform several experiments to compare the convergence speed of the multi-step inertial randomized Kaczmarz algorithm (denoted as MIRK) with that of the two-subspace Kaczmarz method (denoted as TSK).
\vskip 1mm

We use ``IT" and ``CPU" to represent  the arithmetical averages of the required numbers of iteration steps required and  the elapsed CPU times taken to run the algorithms 50 times, respectively. The speed-up of MIRK against TSK is defined by
$$
\hbox{speed-up}=\frac{\hbox{CPU of TSK}}{\hbox{CPU of MIRK}}.
$$

To test methods, we construct various types of $I\times J$ matrices $A$   and set the entries of $A$ to be independent identically
distributed uniform random variables on some interval $[c,1]$. Changing the value of $c$ will appropriately change the coherence of  $A$.

In the implementation of the methods, we use the function $unifrnd$ to randomly generate a solution vector $x^*\in \mathbb{R}^J$,  and take the right-hand side $b\in \mathbb{R}^I$ as $Ax^*$. This ensures that the randomly generated  matrices constitute a solvable  system of equations $Ax=b$. For each matrix construction, two algorithms are run with the  initial vector $x^0=0$ and the same fixed matrix, and terminated once the relative solution error, defined by
$$
\hbox{RSE}=\frac{\|x^k-x^\dag\|^2}{\|x^\dag\|^2}
$$
at the current $x^k$, satisfies $\hbox{RSE}\leq 10^{-6}$. In addition, all experiments are carried out using MATLAB (version R2018a) on a personal computer with 2.30 GHz central processing unit (Intel(R) Core(TM) i5-6300HQ CPU), 12.00GB memory, and Windows operating system (Windows 10).
\vskip 1mm

\begin{table}[!h]
\caption{ IT and CPU of TSK and MIRK for $1000\times 3000$ matrices with different $c$. }
\begin{tabular}{|c|c|c|c|c|c|}
\hline
\multicolumn{2}{|c|}{$c$}&0.9&0.5&0.1&-0.4\\\hline
\multirow{2}*{TSK}&CPU&3.5000&3.3125&3.1719&2.2188\\\cline{2-6}
                   &IT& $2.7362\times 10^4$ & $2.6849\times 10^4$ & $2.6548\times 10^4$ & $2.3490\times 10^4$ \\\hline
\multirow{2}*{MIRK}&CPU&2.4844&2.4688&2.4063&2.2031\\\cline{2-6}
                   &IT& $3.7174\times 10^4$ & $3.7282\times 10^4$ & $3.6742\times 10^4$ & $3.3572\times 10^4$ \\\hline
\multicolumn{2}{|c|}{speed-up}&1.4088&1.3417&1.3182&1.0071\\\hline
\end{tabular}
\end{table}

\begin{table}[!h]
\caption{ IT and CPU of TSK and MIRK for $2000\times 1000$ matrices with different $c$. }
\begin{tabular}{|c|c|c|c|c|c|}
\hline
\multicolumn{2}{|c|}{$c$}&0.9&0.5&0.1&-0.2\\\hline
\multirow{2}*{TSK}&CPU&3.4531&3.3594&2.9375&2.7031\\\cline{2-6}
                   &IT& $5.0883\times 10^4$ & $4.7767\times 10^4$ & $4.5534\times 10^4$ & $6.6282\times 10^4$ \\\hline
\multirow{2}*{MIRK}&CPU&2.7969&2.7344&2.6563&2.6250\\\cline{2-6}
                   &IT& $6.8314\times 10^4$ & $6.7658\times 10^4$ & $6.7208\times 10^4$ & $7.9606\times 10^4$ \\\hline
\multicolumn{2}{|c|}{speed-up}&1.2346&1.2286&1.1059&1.0298\\\hline
\end{tabular}
\end{table}

\begin{figure}[!h]
\centering
\tiny{(a)}\includegraphics[scale=0.46]{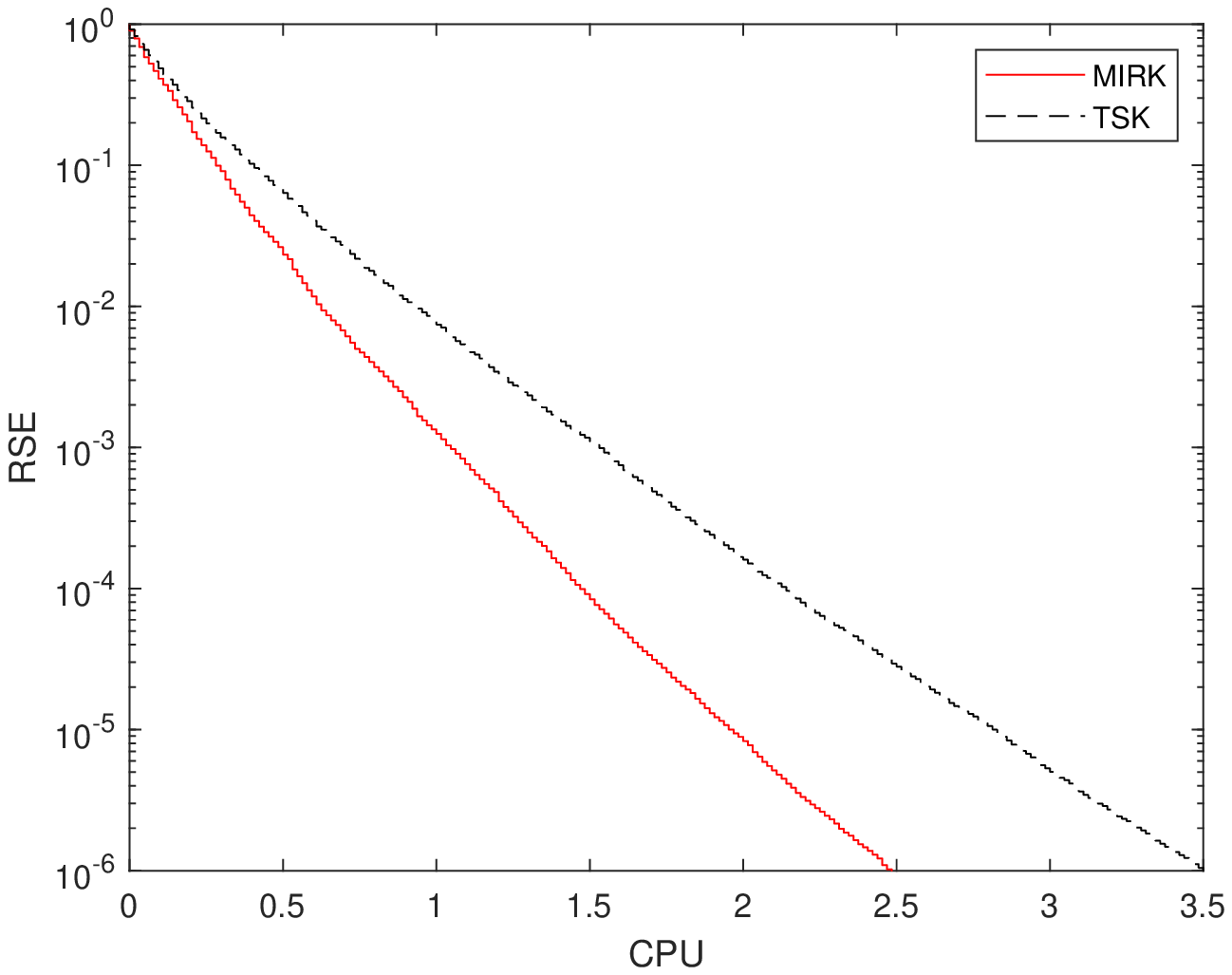}
\tiny{(b)}\includegraphics[scale=0.46]{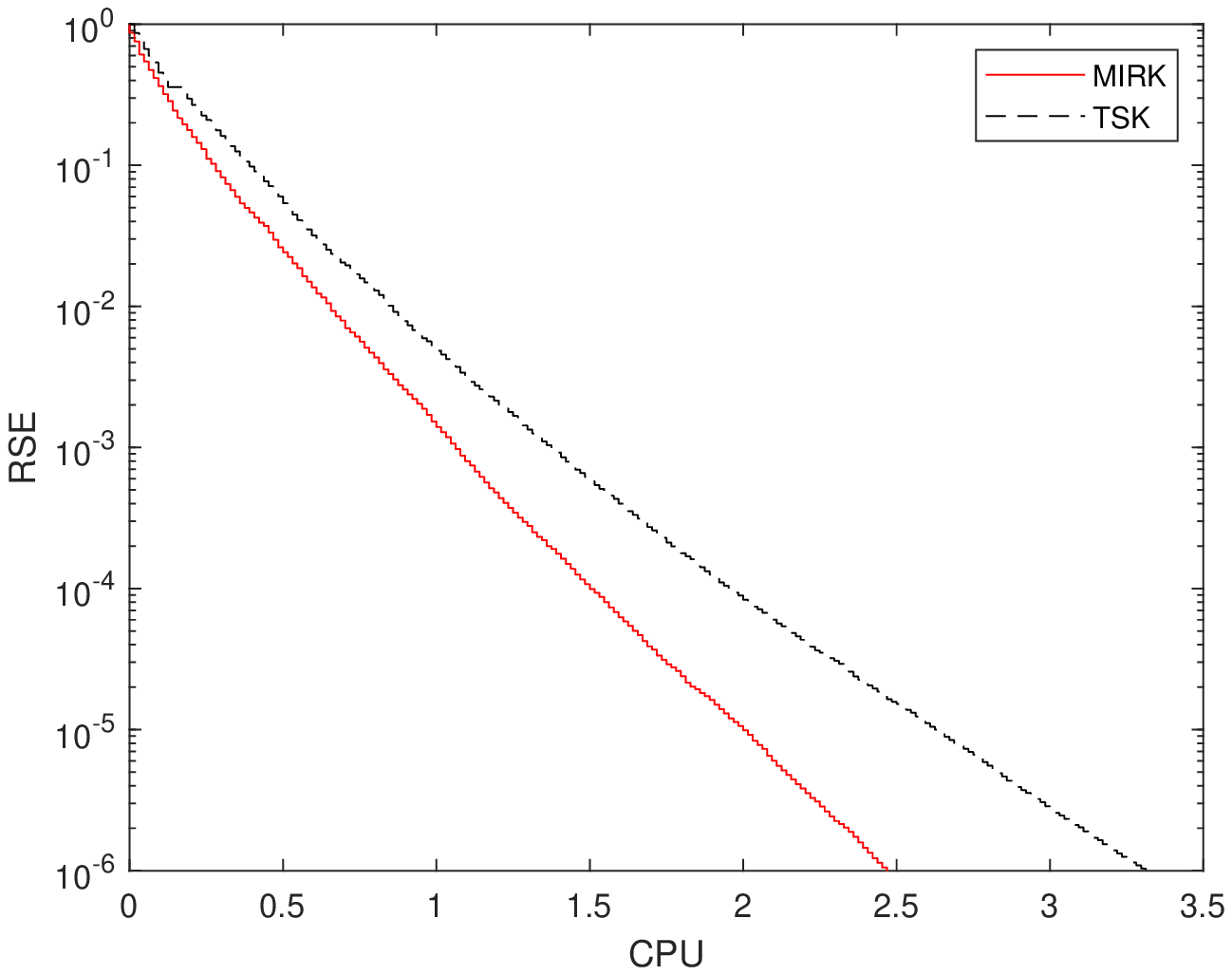}
\tiny{(c)}\includegraphics[scale=0.46]{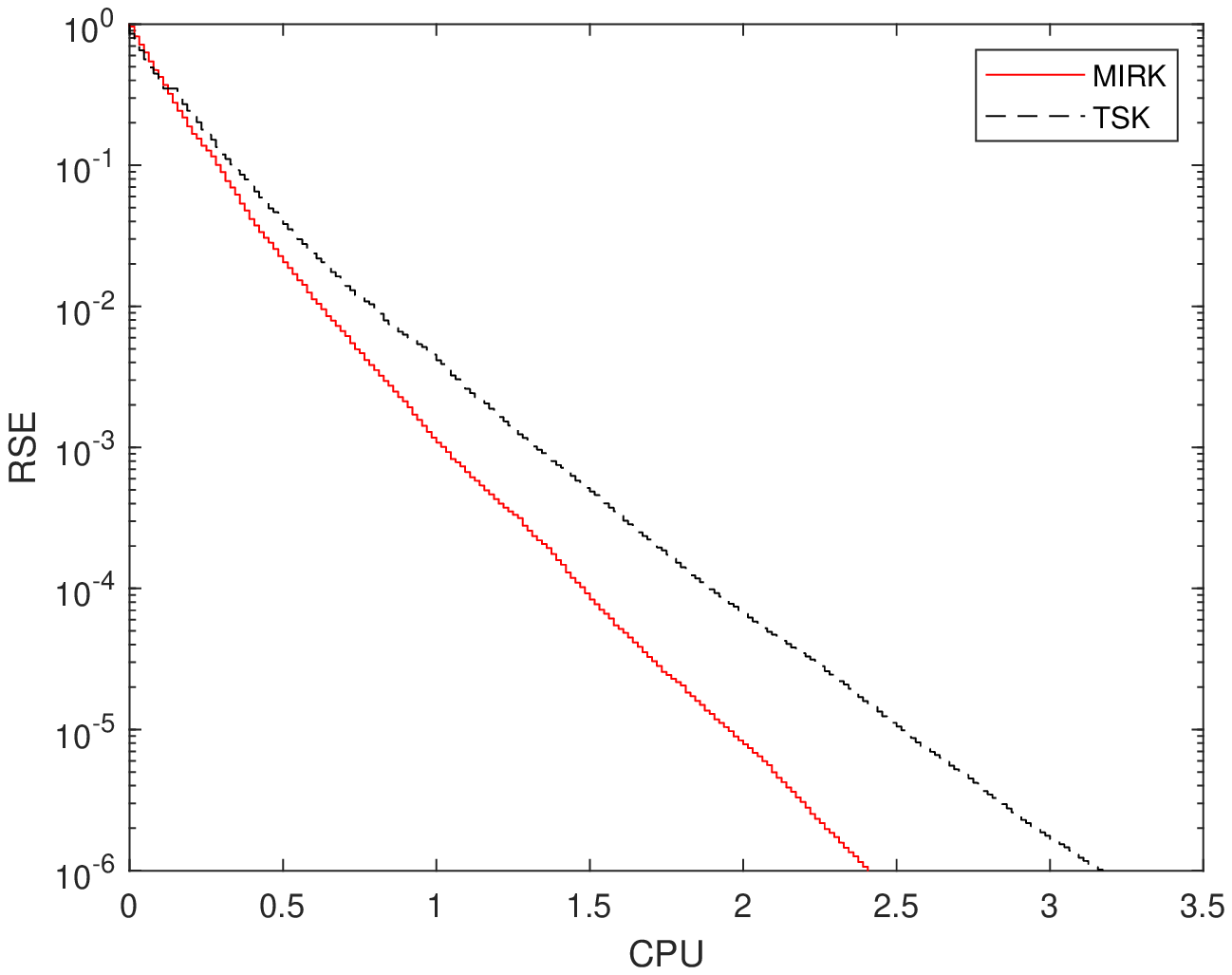}
\tiny{(b)}\includegraphics[scale=0.46]{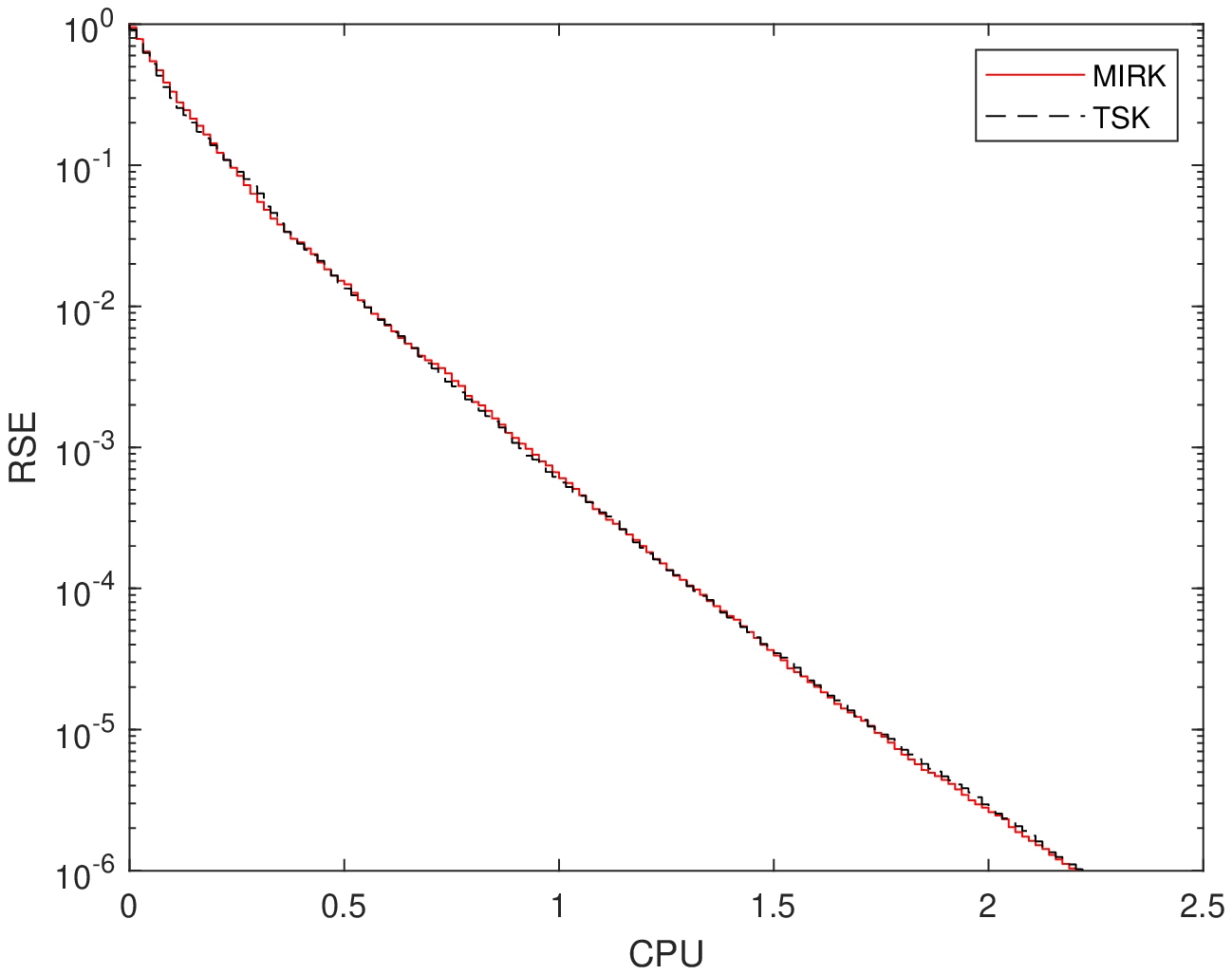}
\caption{RSE versus CPU with $1000\times 3000$ matrices for TSK and MIRK, when $c=0.9$(a), $c=0.5$(b), $c=0.1$(c) and $c=-0.4$(d).}\label{}
\end{figure}

\begin{figure}[!h]
\centering
\tiny{(a)}\includegraphics[scale=0.46]{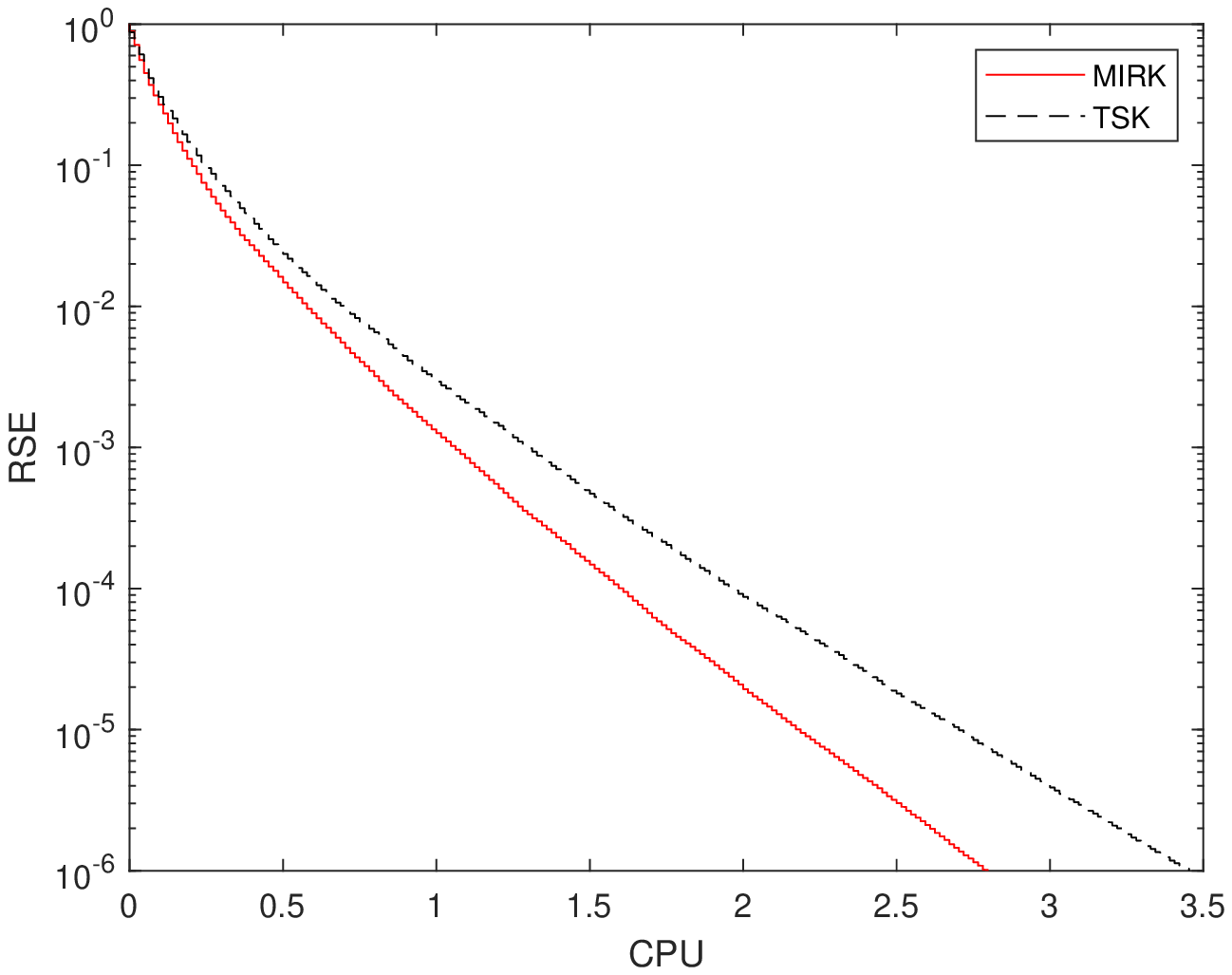}
\tiny{(b)}\includegraphics[scale=0.46]{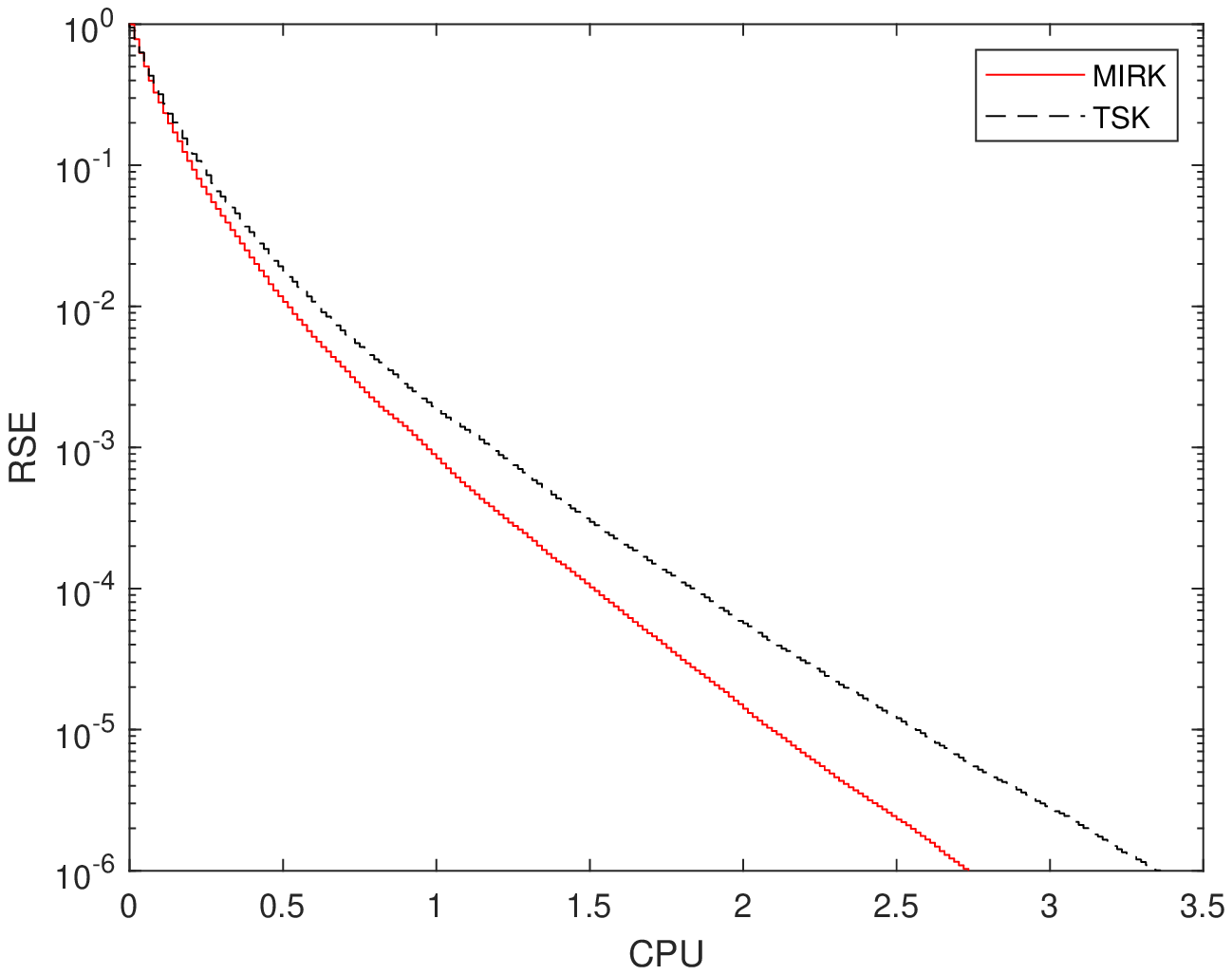}
\tiny{(c)}\includegraphics[scale=0.46]{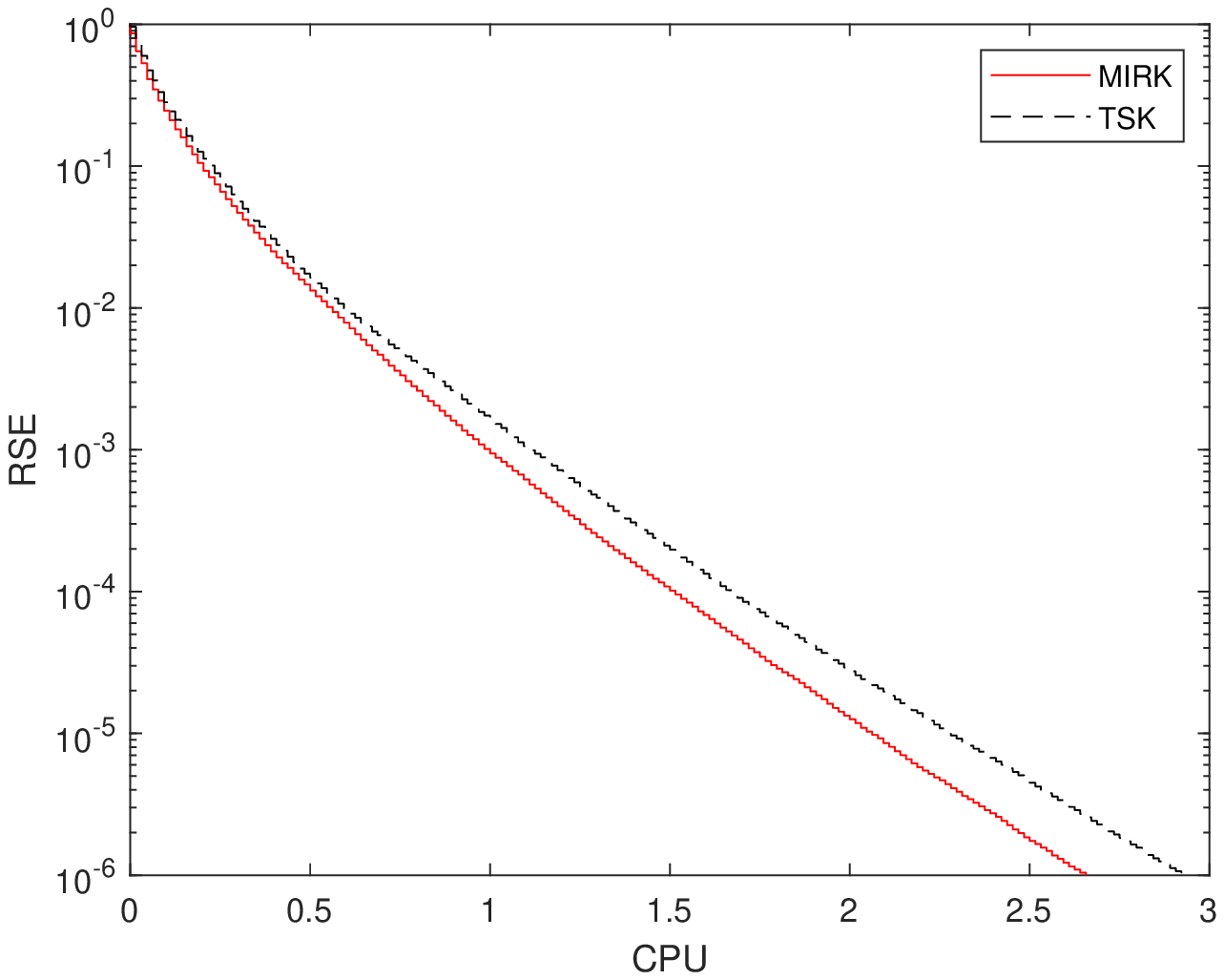}
\tiny{(b)}\includegraphics[scale=0.46]{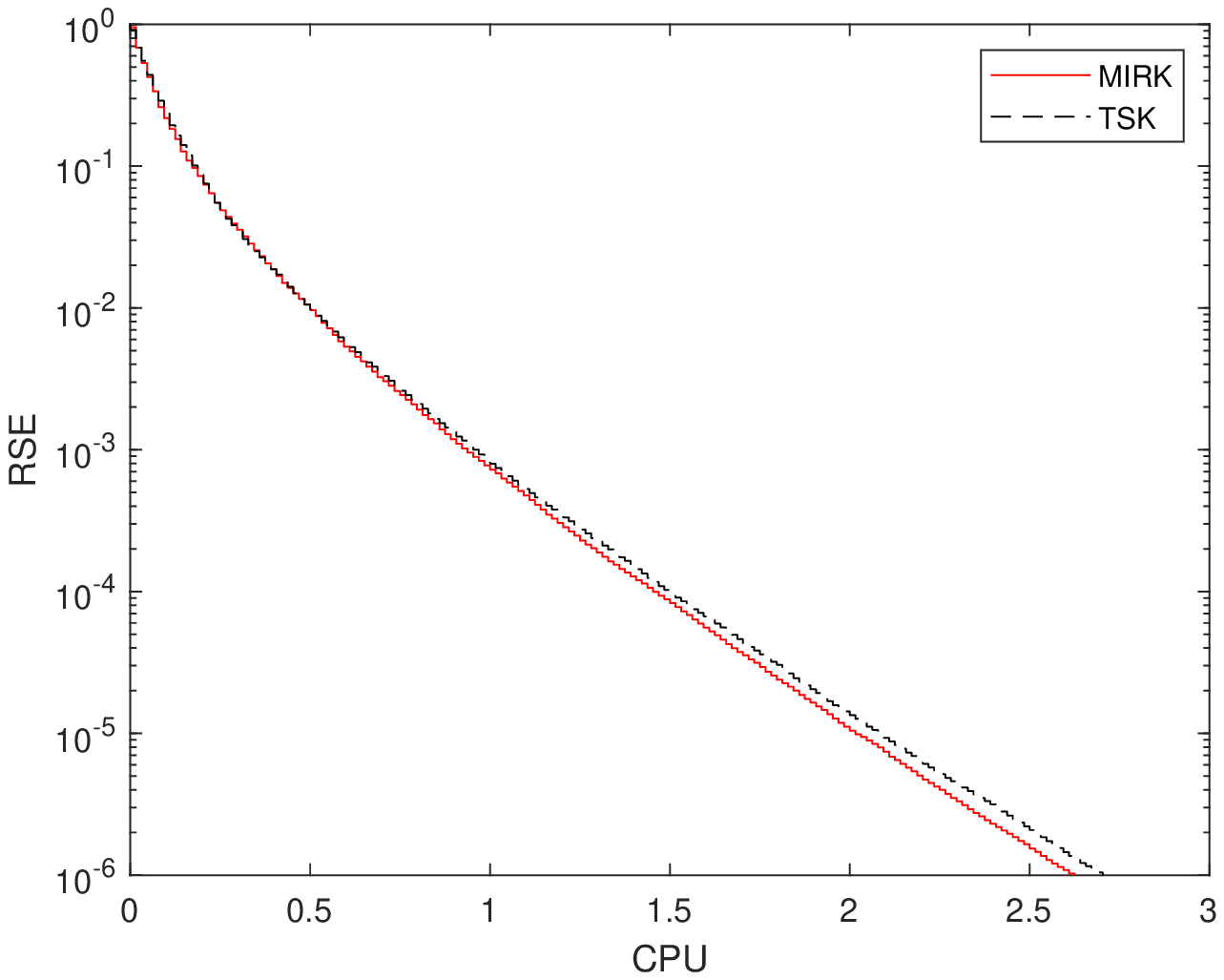}
\caption{RSE versus CPU with $2000\times 1000$ matrices for TSK and MIRK, when $c=0.9$(a), $c=0.5$(b), $c=0.1$(c) and $c=-0.2$(d).}\label{}
\end{figure}

\begin{figure}[!h]
\centering
\tiny{(a)}\includegraphics[scale=0.46]{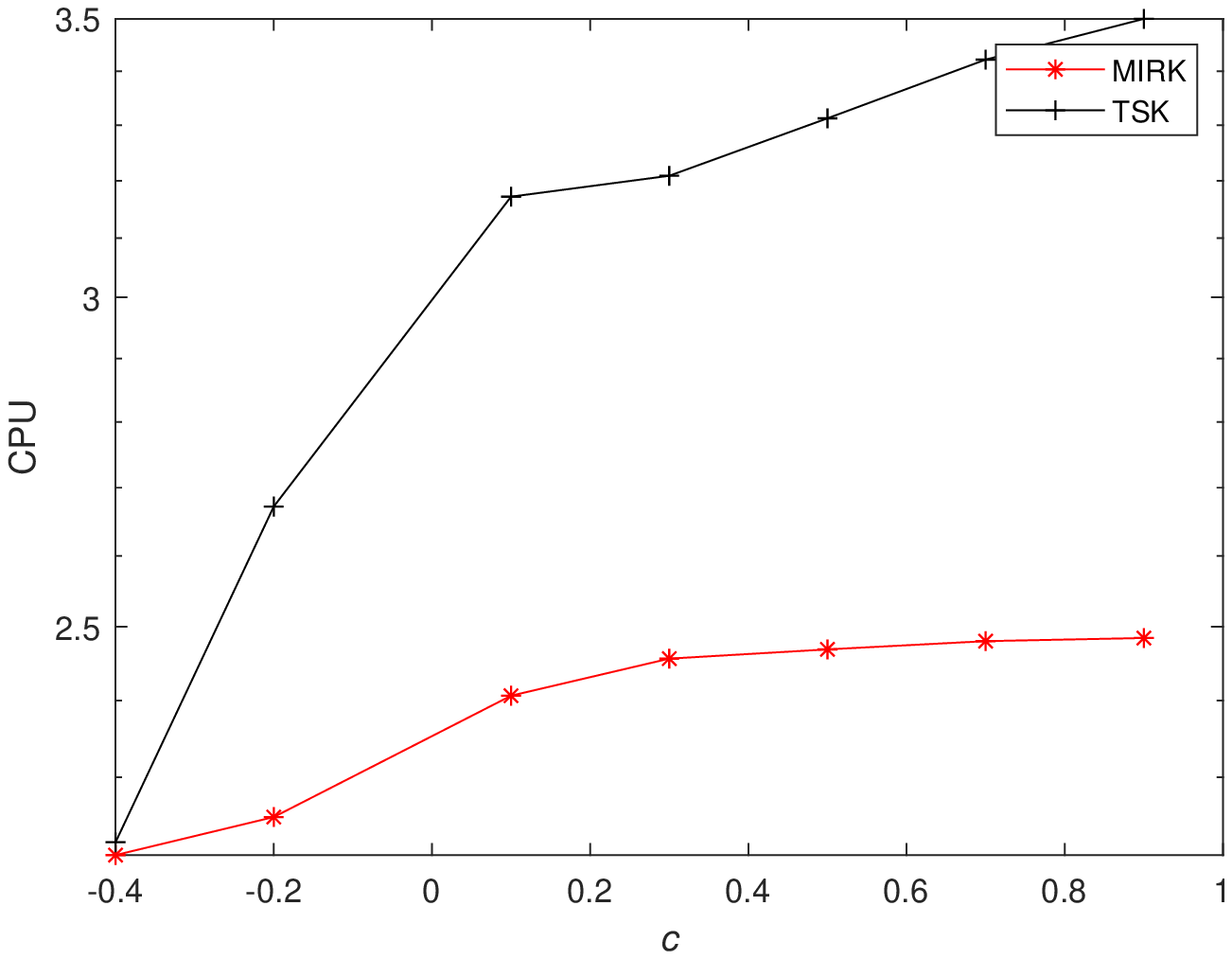}
\tiny{(b)}\includegraphics[scale=0.46]{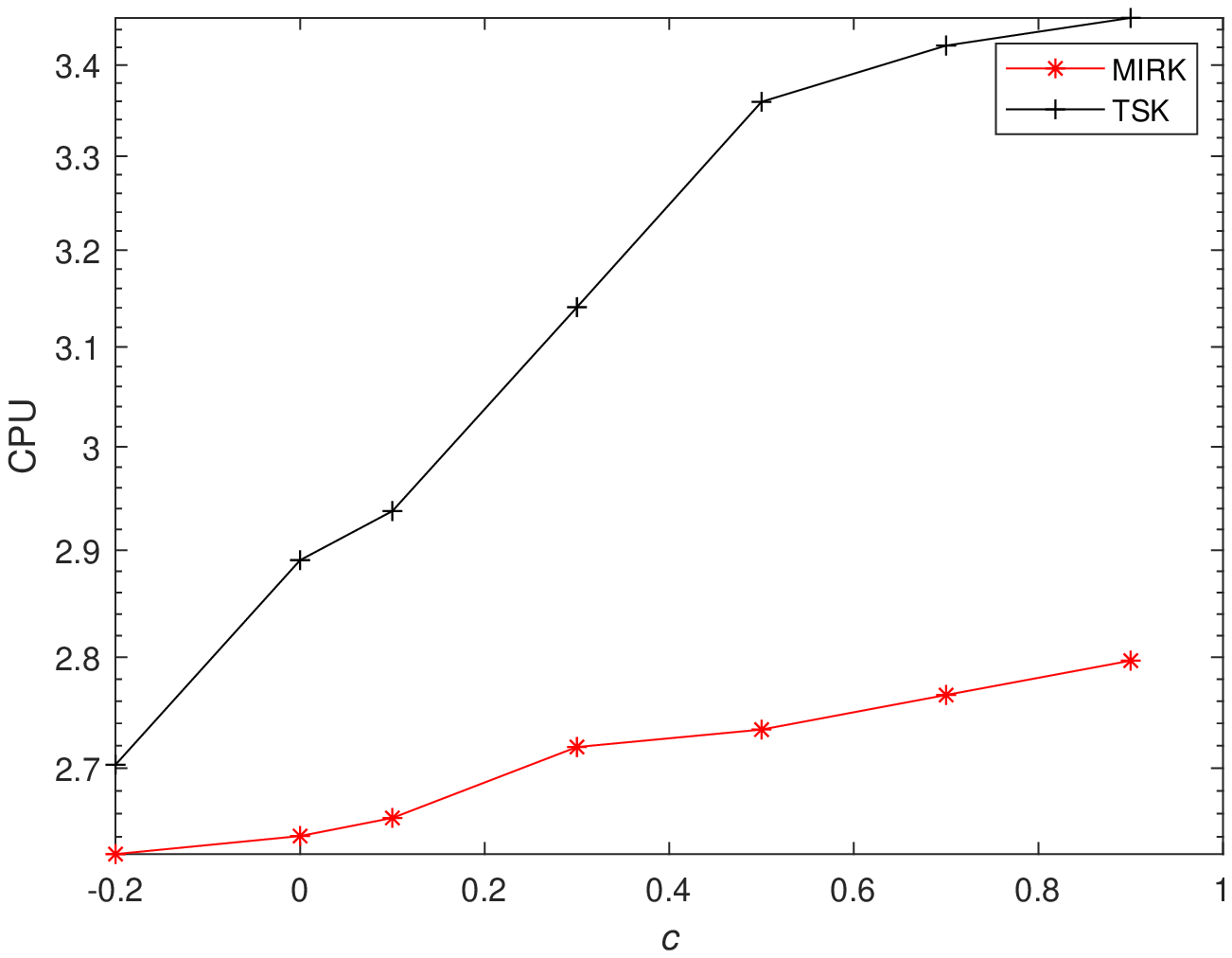}
\caption{CPU versus $c$ with $1000\times 3000$ matrices(a) and $2000\times 1000$ matrices(b) for TSK and MIRK.}\label{}
\end{figure}

\begin{figure}[!h]
\centering
\tiny{(a)}\includegraphics[scale=0.46]{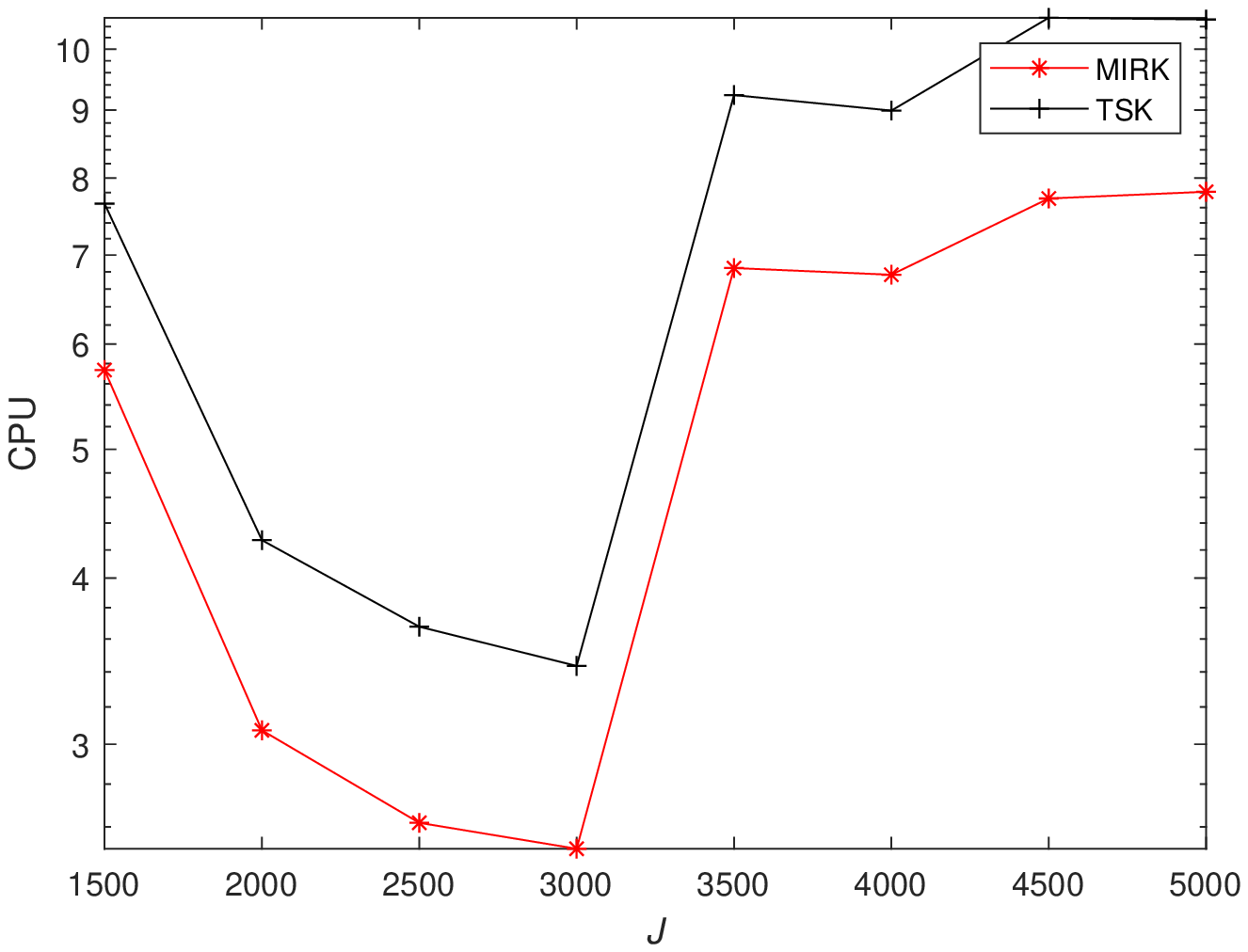}
\tiny{(b)}\includegraphics[scale=0.46]{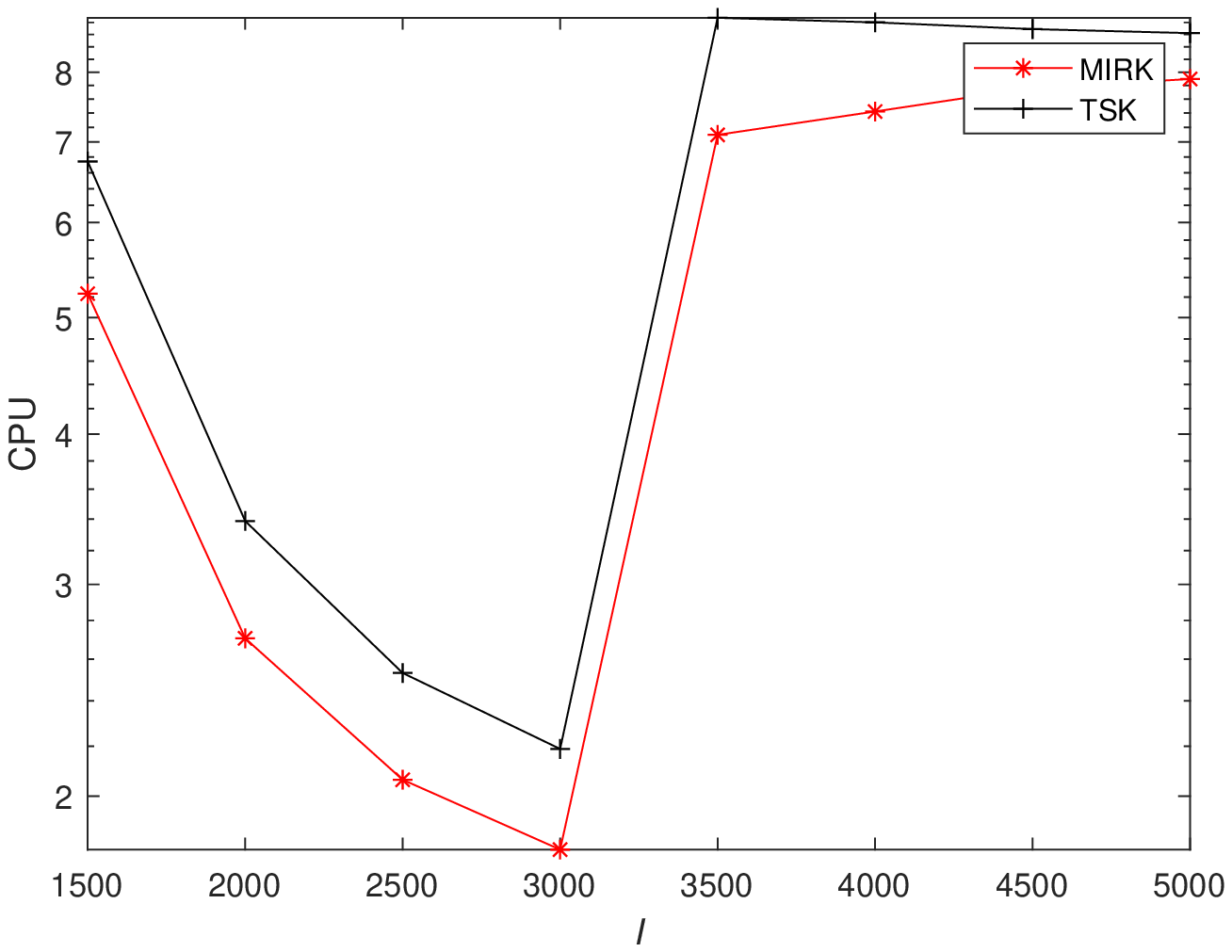}
\caption{ CPU versus $J$(a) and $I$(b) for TSK and MIRK when $I$ is fixed or $J$ is fixed.}\label{}
\end{figure}

We firstly take the  matrix sizes as $1000\times 3000$ and $2000\times 1000$, and  change the values of $c$. The numbers of iteration steps and the computing times for both MIRK and TSK methods are reported in Tables 1-2, from which we see that the MIRK algorithm always outperforms the TSK method for big $c$ in terms of CPU time, whether $A$ is fat or thin.   According to the data, the speed-up becomes larger as $c$ goes close to 1. When the  matrix is fat, the maximum speed-up can be 1.4088. When the matrix is thin, the maximum speed-up can be 1.2346. The data in Tables 1-2 is displayed directly in Figures 1-2 which  depict the curves of the relative solution error  versus CPU time  for the different rows and columns of  matrix $A$ and different values of $c$.  It can be concluded from the figures that when $c=0.9$, the two methods have the largest difference in the decay of the relative solution error and the MIRK algorithm is obviously better than the TSK method. The improvement effect of the MIRK algorithm over the TSK method is weakened  when $c$ successively decreases or even takes a negative value. It is observed that the drop curves almost coincide in Figure 1(d) ($c=-0.4$) and Figure 2(d) ($c=-0.2$), which means that the improvement effect disappears. These can also be seen in Figure 3,  which illustrats that the bigger $c$ is, the more obvious improvement effect of the MIRK algorithm over the TSK method.

\begin{table}[!h]
\caption{ IT and CPU of TSK and MIRK for $I$-by-$J$ matrices with $I=1000$ and different $J$. }
\begin{tabular}{|c|c|c|c|c|c|}
\hline
\multicolumn{2}{|c|}{$J$}&2000&3000&4000&5000\\\hline
\multirow{2}*{TSK}&CPU&4.2719&3.4356&8.9900&10.5266\\\cline{2-6}
                   &IT& $5.0309\times 10^4$ & $2.7114\times 10^4$ & $2.1190\times 10^4$ & $1.8685\times 10^4$ \\\hline
\multirow{2}*{MIRK}&CPU&3.0731&2.5031&6.7650&7.8109\\\cline{2-6}
                   &IT& $6.7512\times 10^4$ & $3.7046\times 10^4$ & $2.9676\times 10^4$ & $2.6438\times 10^4$ \\\hline
\multicolumn{2}{|c|}{speed-up}&1.3901&1.3725&1.3289&1.3477\\\hline
\end{tabular}
\end{table}

\begin{table}[!h]
\caption{ IT and CPU of TSK and MIRK for $I$-by-$J$ matrices with $J=1000$ and different $I$. }
\begin{tabular}{|c|c|c|c|c|c|}
\hline
\multicolumn{2}{|c|}{$I$}&2000&3000&4000&5000\\\hline
\multirow{2}*{TSK}&CPU&3.3872&2.1891&8.8016&8.6222\\\cline{2-6}
                   &IT& $5.0554\times 10^4$ & $2.7714\times 10^4$ & $2.1325\times 10^4$ & $1.8773\times 10^4$ \\\hline
\multirow{2}*{MIRK}&CPU&2.7056&1.8053&7.4206&7.8966\\\cline{2-6}
                   &IT& $6.7936\times 10^4$ & $3.7014\times 10^4$ & $2.9572\times 10^4$ & $2.6326\times 10^4$ \\\hline
\multicolumn{2}{|c|}{speed-up}&1.2519&1.2126&1.1861&1.0919\\\hline
\end{tabular}
\end{table}

\begin{figure}[!h]
\centering
\tiny{(a)}\includegraphics[scale=0.46]{1.eps}
\tiny{(b)}\includegraphics[scale=0.46]{2.eps}
\caption{ CPU versus $J$(a) and $I$(b) for TSK and MIRK when $I$ is fixed or $J$ is fixed.}\label{}
\end{figure}

We also fix $c=0.9$ and observe the performance of two algorithms for different row $I$ and and column $J$ of $A$.  We set $I=1000$ and change the values of $J$ in Table 3, which shows that in the fat case the speed-up can attain at most 1.3901 and least 1.3289.  We fix $J=1000$ and change the values of $I$ in Table 4 which illustrates that in thin case, the speed-up can be at most 1.2519 and at least 1.0919. It is concluded that the improvement effect is better for the fat case and decreases as $A$ becomes fatter or thinner. The above observations are visually plotted in Figure 4 which shows that  the  curves of the CPU time versus $J$ or $I$ of the MIRK algorithm are always lower than those of the TSK method no matter what the size of the matrix is.

{
\section{Some concluding remarks}

In this paper, we firstly give a new convergence rate of the two-subsapce Kaczmarz method by regarding it as an alternated inertial  randomized Kaczmarz algorithm. Secondly, we provide a multi-step inertial  randomized Kaczmarz algorithm which accelerates the alternated inertial  randomized Kaczmarz algorithm. The preliminary numerical examples are presented to support the theory results.

It is worth to investigate the combination of the inertial extrapolation with  variants of Kaczmarz method, for example the  greedy randomized Kaczmarz method \cite{BW} and the randomized block Kaczmarz method \cite{NT2014}.

\end{document}